\documentclass[reqno, 12pt]{article}

\pdfoutput=1

\usepackage{enumerate}
\usepackage{latexsym}
\usepackage[centertags]{amsmath}
\usepackage{amsfonts}
\usepackage{amssymb}
\usepackage{amsthm}
\usepackage{mathtools}
\usepackage{newlfont}
\usepackage{graphics}
\usepackage{color}
\usepackage{float}
\usepackage{diagbox}
\usepackage{tocloft}
\usepackage{titlesec}
\usepackage{booktabs}
\usepackage{extpfeil}
\usepackage{centernot}
\usepackage[pagebackref]{hyperref}
\usepackage[linesnumbered,ruled,vlined]{algorithm2e}
\usepackage{url}
\usepackage{hyperref}
\usepackage{longtable}
\usepackage{rotating}
\usepackage{multirow}
\usepackage{extarrows}
\usepackage[sort,compress,numbers]{natbib}
\usepackage[utf8]{inputenc}

\newtheorem{thm}{Theorem}[section]

\newtheorem{lem}[thm]{Lemma}
\newtheorem{prop}[thm]{Proposition}

\theoremstyle{mydefinition}
\newtheorem{dfn}[thm]{Definition}
\theoremstyle{myremark}
\newtheorem{rem}[thm]{Remark}
\newtheorem{exa}[thm]{Example}

\newtheorem{prob}[thm]{Open Problem}

\allowdisplaybreaks[4]

\SetKwInput{KwInput}{Input}                % Set the Input
\SetKwInput{KwOutput}{Output}              % set the Output

\renewcommand{\P}{{\mathbb{P}}}

\title{Ehrhart Theory of the Join of Two Lattice Polytopes}

\author{Feihu Liu$^{\color{blue} \dag}$, Sihao Tao$^{\color{blue} \S}$, and Guoce Xin$^{\color{blue} \P}$
\\[2mm]
{\small $^{\color{blue} \dag, \S, \P}$ School of Mathematical Sciences,}\\[-0.8ex]
{\small Capital Normal University, Beijing, 100048, P.R.~China}\\
{\small {\color{blue} $^\dag$} Email address: liufeihu7476@163.com}\\
{\small {\color{blue} $^\S$} Email address: sihao\_tao@cnu.edu.cn}\\
{\small {\color{blue} $^\P$} Email address: guoce\_xin@163.com}
}

\date{\today}

\begin{document}

\maketitle

\begin{abstract}
Inspired by research on the Cartesian product of two lattice polytopes, this paper investigates the Ehrhart theory of the join of two lattice polytopes. This is also a well-known open problem listed on the website of the American Institute of Mathematics. This paper resolves this open problem. We first construct counterexamples showing that the join of two Ehrhart positive polytopes is not necessarily Ehrhart positive. Then we prove that if two lattice polytopes have the integer decomposition property and the spanning property, then their join also has these two properties. However, the very ample property is not inherited under joins. Finally, we show that unimodular triangulations, regular triangulations, and quadratic triangulations are preserved under the join operation. As a byproduct, we state the necessary and sufficient condition for the Cartesian product of two Gorenstein lattice polytopes to remain Gorenstein.
\end{abstract}

\noindent
\begin{small}
\emph{2020 Mathematics subject classification}: Primary 52B20;  Secondary 52B05; 52B11; 05A15.
\end{small}

\noindent
\begin{small}
\emph{Keywords}: Lattice polytope; Ehrhart positive; Gorenstein polytope; Join; Integer decomposition property; Unimodular triangulation; Regular triangulation; Quadratic triangulation.
\end{small}

\tableofcontents

\section{Introduction}
Let $\mathcal{P}$ be a \emph{lattice $d$-polytope}, i.e., a convex polytope in $\mathbb{R}^n$ whose vertices are elements of $\mathbb{Z}^n$ and whose affine span has dimension $d\leq n$.
Lattice polytopes are also called \emph{integral polytopes}.
For any nonnegative integer $t$, let $t\mathcal{P} = \{ t\alpha : \alpha \in \mathcal{P} \}$ be the $t$-th dilation of $\mathcal{P}$. Then consider the function
$$i(\mathcal{P},t)=|t\mathcal{P}\cap \mathbb{Z}^n|,\quad t=1,2,\ldots,$$
which counts the number of integer points in $t\mathcal{P}$.

The study on $i(\mathcal{P},t)$ originated with Ehrhart~\cite{Ehrhart62} in 1962. He proved that the function $i(\mathcal{P},t)$ is a polynomial in $t$ of degree $d$ with constant term $1$.
Now $i(\mathcal{P},t)$ is called the \emph{Ehrhart polynomial} of $\mathcal{P}$.
A basic result~\cite[Corollary 3.20; Theorem 5.6]{BeckRobins} is that the leading coefficient of $i(\mathcal{P},t)$ equals the relative volume of $\mathcal{P}$ and the second highest coefficient of $i(\mathcal{P},t)$ equals half of the boundary volume of $\mathcal{P}$.
The remaining coefficients, however, are more intricate and lack a straightforward geometric interpretation~\cite{McMullen77}.

The generating function
$$\mathrm{Ehr}(\mathcal{P},x)=1+\sum_{t\geq 1} i(\mathcal{P},t)x^t$$
is called the \emph{Ehrhart series} of $\mathcal{P}$.
It can be expressed in the form
$$\mathrm{Ehr}(\mathcal{P},x)=\frac{h_{\mathcal{P}}^{*}(x)}{(1-x)^{d+1}}=\frac{h_d^{*}x^d + h_{d-1}^{*}x^{d-1}+ \cdots+ h_1^*x+ h_0^*}{(1-x)^{d+1}},$$
where $\dim\mathcal{P}=d$ and $h^{*}(x)$ is a polynomial in $x$ of degree at most $d$ (see~\cite[Chapter 3.5]{BeckRobins}).
The polynomial $h_{\mathcal{P}}^*(x)$ is commonly known as the \emph{$h^*$-polynomial} of $\mathcal{P}$.
Stanley~\cite{Stanleyh-polynomial} (or~\cite[Theorem 3.12]{BeckRobins}) shows that the coefficients of $h^*$-polynomial are nonnegative integers.
Clearly the Ehrhart polynomial is given by
\begin{align*}
i(\mathcal{P},t) = \sum_{j=0}^{d} h^*_j \binom{t + d - j}{d}.
\end{align*}

For further background and knowledge on polytopes, we refer to several excellent books~\cite{BeckRobins}, \cite[Chapter 4]{RP.Stanley}, and \cite{Ziegler}, as well as the classic survey articles \cite{Ferroni23} and \cite{FuLiu19}.
The main object of study in this paper is the join of two polytopes. The definition appears in~\cite{Henk2004}.

\begin{dfn}
Let $\mathcal{P} \subset \mathbb{R}^p$ and $\mathcal{Q} \subset \mathbb{R}^q$ be two polytopes. The \emph{join} of $\mathcal{P}$ and $\mathcal{Q}$, denoted by $\mathcal{P} * \mathcal{Q}$, is defined as the convex hull of their respective embeddings into orthogonal subspaces of $\mathbb{R}^{p+q+1}$, separated by a height of $1$ in the final coordinate. Formally, it is given by
\begin{align*}
\mathcal{P} * \mathcal{Q} = \operatorname{conv} \left( \{ (x, \mathbf{0}_q, 1) \mid x \in \mathcal{P} \} \cup \{ (\mathbf{0}_p, y, 0) \mid y \in \mathcal{Q} \} \right) \subset \mathbb{R}^{p+q+1},
\end{align*}
where $\mathbf{0}_p$ and $\mathbf{0}_q$ denote the origin in $\mathbb{R}^p$ and $\mathbb{R}^q$, respectively.
\end{dfn}

For instance, the join of two line segments placed in general position yields a tetrahedron.  A pyramid over a polytope $\mathcal{P}$ is defined as the join of $\mathcal{P}$ with a point.
The geometric properties of the join are immediate from affine independence. In particular, the following result holds.

\begin{prop}\label{Prop-Join-Properties}
Let $\mathcal{P} \subset \mathbb{R}^p$ and $\mathcal{Q} \subset \mathbb{R}^q$ be a lattice $p$-polytope and a lattice $q$-polytope, respectively. Then $\mathcal{P} * \mathcal{Q} \subset \mathbb{R}^{p+q+1}$ satisfies the following properties:
\begin{enumerate}
\item[\textup{(i)}] $\mathcal{P} * \mathcal{Q}$ is a lattice $(p+q+1)$-polytope.

\item[\textup{(ii)}] The vertex set of $\mathcal{P} * \mathcal{Q}$ is precisely the disjoint union of the canonical embeddings of the vertex sets of $\mathcal{P}$ and $\mathcal{Q}$, that is,
    $$\operatorname{vert}(\mathcal{P} * \mathcal{Q}) = \left( \operatorname{vert}(\mathcal{P}) \times \{\mathbf{0}_q\} \times \{1\} \right) \cup \left( \{\mathbf{0}_p\} \times \operatorname{vert}(\mathcal{Q}) \times \{0\} \right).$$

\item[\textup{(iii)}] The join of any two lattice simplices is again a lattice simplex.

\item[\textup{(iv)}] The Ehrhart series of the join is the product of their respective Ehrhart series (see {\em\cite[Lemma 1.3]{HenkTagami}} or {\em\cite[Exercise 3.38]{BeckRobins}}), i.e.,
    $$\mathrm{Ehr}(\mathcal{P}*\mathcal{Q},x) = \mathrm{Ehr}(\mathcal{P}, x)\cdot \mathrm{Ehr}(\mathcal{Q},x).$$
\end{enumerate}
\end{prop}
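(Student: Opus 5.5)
The plan is to identify $\mathcal{P}*\mathcal{Q}$ with the convex hull of the affinely independent "blocks" $\mathcal{P}' = \mathcal{P}\times\{\mathbf{0}_q\}\times\{1\}$ and $\mathcal{Q}' = \{\mathbf{0}_p\}\times\mathcal{Q}\times\{0\}$, which lie in skew affine subspaces $A=\mathbb{R}^p\times\{\mathbf{0}_q\}\times\{1\}$ and $B=\{\mathbf{0}_p\}\times\mathbb{R}^q\times\{0\}$. The key geometric fact is that $\operatorname{aff}(A\cup B)=\mathbb{R}^{p+q+1}$ and $A,B$ are skew, i.e.\ their direction spaces intersect trivially and $A\cap B=\emptyset$ with $\dim \operatorname{aff}(A\cup B)=\dim A+\dim B+1$. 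Consequently every point of $\mathcal{P}*\mathcal{Q}$ has a \emph{unique} representation $z=\lambda(x,\mathbf{0},1)+(1-\lambda)(\mathbf{0},y,0)$ with $\lambda\in[0,1]$, $x\in\mathcal{P}$, $y\in\mathcal{Q}$ (with $x$ irrelevant if $\lambda=0$, $y$ if $\lambda=1$): indeed the last coordinate gives $\lambda$, and the first $p$ and next $q$ coordinates give $\lambda x$ and $(1-\lambda)y$.

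For (i), the vertices listed in (ii) are lattice points, so it suffices to show dimension $p+q+1$: choose affinely independent $x_0,\dots,x_p\in\mathcal{P}$ and $y_0,\dots,y_q\in\mathcal{Q}$; the $p+q+2$ embedded points are affinely independent, since an affine dependence $\sum a_i(x_i,\mathbf{0},1)+\sum b_j(\mathbf{0},y_j,0)=0$ with $\sum a_i+\sum b_j=0$ forces $\sum a_i=0$ from the last coordinate, hence $\sum b_j=0$, and then each block gives an affine dependence in $\mathcal{P}$ or $\mathcal{Q}$. For (ii), $\mathcal{P}*\mathcal{Q}$ is the convex hull of the listed points, so vertices are among them; conversely a vertex $v$ of $\mathcal{P}$ is the unique maximizer over $\mathcal{P}$ of some linear $c$, and then the functional $(c,\mathbf{0},M)$ for large $M$ is uniquely maximized at $(v,\mathbf{0},1)$ (using the unique representation above), similarly for $\mathcal{Q}$ with $-M$ on the last coordinate. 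Part (iii) follows directly from (i), (ii) and the affine independence computation: the join of simplices has $(p+1)+(q+1)$ vertices and dimension $p+q+1$.

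For (iv), which is the main point, I will pass to cones. Let $\operatorname{cone}(\mathcal{P})=\{(tx,t):x\in\mathcal{P},t\ge0\}\subset\mathbb{R}^{p+1}$, so $\mathrm{Ehr}(\mathcal{P},x)=\sum_{(u,t)}x^t$ over lattice points of the cone. For the join, homogenize to $\mathbb{R}^{p+q+2}$: a point $(w_1,w_2,w_3,t)$ lies in $\operatorname{cone}(\mathcal{P}*\mathcal{Q})$ iff $w_3\le t$ and, writing $s=w_3$, $r=t-w_3$, one has $(w_1,s)\in\operatorname{cone}(\mathcal{P})$ and $(w_2,r)\in\operatorname{cone}(\mathcal{Q})$ — this is the unique-representation fact, scaled. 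The map $(w_1,w_2,w_3,t)\mapsto\big((w_1,w_3),(w_2,t-w_3)\big)$ is a unimodular linear isomorphism $\mathbb{Z}^{p+q+2}\to\mathbb{Z}^{p+1}\times\mathbb{Z}^{q+1}$ carrying the join cone onto $\operatorname{cone}(\mathcal{P})\times\operatorname{cone}(\mathcal{Q})$ and the height $t$ to $s+r$. Summing $x^{s+r}$ over lattice points of the product cone factors into the product of the two Ehrhart series.

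The step needing the most care is verifying that the cone description is exact at the boundary cases $s=0$ or $r=0$ (where the apex of one factor cone corresponds to points of the other polytope alone); I would check this by noting both cones contain only the apex at height $0$, so the degenerate representations match up exactly, and that lattice points correspond because the map and its inverse are integral.
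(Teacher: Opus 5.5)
Your proof is correct, and it is more complete than what the paper offers. The paper gives no proof of this proposition: it only remarks that (i)--(iii) are ``immediate from affine independence'' and cites Henk--Tagami and Beck--Robins for (iv).

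For (i)--(iii), your affine-independence computation and your supporting-functional argument are exactly what that remark has in mind. The computation in (i) uses only that $x_0,\dots,x_p$ and $y_0,\dots,y_q$ are affinely independent, not that they span the ambient spaces. So it applies verbatim to simplices of any dimension. That general form of (iii) is what is implicitly needed when joins of faces appear in Theorem~\ref{Theorem-Quadratic-Triang-Join}.

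For (iv), your unimodular isomorphism $\operatorname{cone}(\mathcal{P}*\mathcal{Q})\cong\operatorname{cone}(\mathcal{P})\times\operatorname{cone}(\mathcal{Q})$, with grading $t=s+r$, is the standard argument behind the cited exercise. A dehomogenized variant, closer to the paper's own tools, slices $t(\mathcal{P}*\mathcal{Q})$ by its last coordinate, as in Lemma~\ref{Lemma-IDP-two-convex-comb} and the proof of Theorem~\ref{Theorem-IDP-Join}. This gives $i(\mathcal{P}*\mathcal{Q},t)=\sum_{k=0}^{t} i(\mathcal{P},k)\, i(\mathcal{Q},t-k)$, which is the coefficientwise form of the product formula. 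Your cone formulation handles the endpoint cases $k\in\{0,t\}$ uniformly through the apex, which is its small advantage.

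One presentational correction: the existence of the representation $z=\lambda(x,\mathbf{0}_q,1)+(1-\lambda)(\mathbf{0}_p,y,0)$ does not follow from skewness. It follows from regrouping convex weights: the convex hull of two convex sets is the union of segments joining them, as in Lemma~\ref{Lemma-IDP-two-convex-comb}. Skewness only gives uniqueness, and your cone description needs both.
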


Nill and Schepers~\cite{BNill-Schepers} studied different kinds of joins of Gorenstein polytopes.
Ehrenborg and Fox~\cite{Ehrenborg} determined explicit recurrences for computing the $cd$-index of the join of two polytopes.
Some algebraic applications of the join of two polytopes are given in~\cite{BrunsBinomial,Erlandsson}

A definition similar to the join of polytopes is the \emph{free sum} of polytopes.
The \emph{free sum} of $\mathcal{P}$ and $\mathcal{Q}$, denoted by $\mathcal{P} \oplus \mathcal{Q}$, is defined as the convex hull
\begin{align*}
\mathcal{P} \oplus \mathcal{Q} = \operatorname{conv} \left( \{ (x, \mathbf{0}_q) \mid x \in \mathcal{P} \} \cup \{ (\mathbf{0}_p, y) \mid y \in \mathcal{Q} \} \right) \subset \mathbb{R}^{p+q}.
\end{align*}
The free sum of polytopes has been extensively studied; see, e.g., \cite{BeckJayawant,Braun2006,Stapledon11}.

A well-known open problem concerning the join of two lattice polytopes is listed on the website of the American Institute of Mathematics~\cite{AimPl-Website}.

\begin{prob}{\em \cite[Problem 1.35]{AimPl-Website}}\label{Problem-Join-open}
For lattice polytopes $\mathcal{P}$ and $\mathcal{Q}$, which Ehrhart-theoretic properties pass through their join? For example, if $\mathcal{P}$ and $\mathcal{Q}$ are both Ehrhart positive, is $\mathcal{P} * \mathcal{Q}$ as well?
\end{prob}

In this paper, we resolve this open problem.
The join of two lattice polytopes is studied by comparing it with the relevant results on their Cartesian product.
More precisely, we obtain the following results.
\begin{enumerate}
    \item[\textup{(i)}] If $\mathcal{P}$ and $\mathcal{Q}$ are Ehrhart positive, then $\mathcal{P} * \mathcal{Q}$ is not necessarily Ehrhart positive. (See Theorem \ref{thm:join_not_preserve_positivity} or Example \ref{Example-Join-Ehrhartpositive}.)

    \item[\textup{(ii)}] If $h^*_{\mathcal{P}}$ and $h^*_{\mathcal{Q}}$ are unimodal, then $h^*_{\mathcal{P} *\mathcal{Q}}$ is not necessarily unimodal. (See Theorem \ref{Theorem-Unimod-join}.)

    \item[\textup{(iii)}] If $\mathcal{P}$ and $\mathcal{Q}$ are spanning, then $\mathcal{P} * \mathcal{Q}$ is spanning. (See Theorem \ref{Theorem-Spanning-Join}.)

    \item[\textup{(iv)}] If $\mathcal{P}$ and $\mathcal{Q}$ are very ample, then $\mathcal{P} * \mathcal{Q}$ is not necessarily very ample. (See Theorem \ref{Theorem-Veryample-Join}.)

    \item[\textup{(v)}] If $\mathcal{P}$ and $\mathcal{Q}$ have integer decomposition property, then $\mathcal{P} * \mathcal{Q}$ has integer decomposition property. (See Theorem \ref{Theorem-IDP-Join}.)

    \item[\textup{(vi)}] If $\mathcal{P}$ and $\mathcal{Q}$ have unimodular triangulations, then $\mathcal{P} * \mathcal{Q}$ has a unimodular triangulation. (See Theorem \ref{Theorem-Unimodular-Triangulation}.)

    \item[\textup{(vii)}] If $\mathcal{P}$ and $\mathcal{Q}$ have regular unimodular triangulations, then $\mathcal{P} * \mathcal{Q}$ has a regular unimodular triangulation. (See Theorem \ref{Theorem-Regular-Uniom-Triangu}.)

    \item[\textup{(viii)}] If $\mathcal{P}$ and $\mathcal{Q}$ have quadratic triangulations, then $\mathcal{P} * \mathcal{Q}$ has a quadratic triangulation. (See Theorem \ref{Theorem-Quadratic-Triang-Join}.)
\end{enumerate}

As a byproduct, we also prove that (also see Theorem \ref{Theorem-Gorenstein-Cartesion}):
If $\mathcal{P}$ and $\mathcal{Q}$ are Gorenstein lattice polytopes with indices $r_{\mathcal{P}}$ and $r_{\mathcal{Q}}$, respectively, then the Cartesian product $\mathcal{P} \times \mathcal{Q}$ is a Gorenstein polytope (i.e., $h^*_{\mathcal{P} \times \mathcal{Q}}(x)$ is palindromic) if and only if $r_{\mathcal{P}} = r_{\mathcal{Q}}$.

The paper is organized as follows.
In Section \ref{Section-Two-join}, we discuss the Ehrhart positivity of the join of two lattice polytopes. Specifically, we construct counterexamples showing that the join of two Ehrhart positive polytopes is not necessarily Ehrhart positive.
Section \ref{Section-2-Preliminary} presents results on the Cartesian product of two polytopes, providing a contrast to our study of the join operation.
In Section \ref{Section-Four-unimod}, we first state a necessary and sufficient condition for the Cartesian product of two Gorenstein lattice polytopes to remain Gorenstein. We then provide a counterexample demonstrating that the join of two $h^*$-unimodal polytopes is not necessarily $h^*$-unimodal.
Section \ref{Section-Five-idp} is divided into three subsections, which respectively explore the integer decomposition property, very ampleness, and the spanning property of the join of two lattice polytopes.
Section \ref{Section-Six-Trign} focuses on triangulations of the join of two lattice polytopes, with particular emphasis on unimodular, regular, and quadratic triangulations.
Finally, Section \ref{Section-Seven-CR} provides concluding remarks.

\section{Ehrhart positivity}\label{Section-Two-join}

This section focuses on the Ehrhart positivity of the join of two lattice polytopes.
This is a subproblem primarily mentioned in Open problem \ref{Problem-Join-open}.
This section is relatively independent of the other sections.

A  lattice polytope $\mathcal{P}$ is said to be \emph{Ehrhart positive} (or to possess \emph{Ehrhart positivity}) if all coefficients of $i(\mathcal{P},t)$ are non-negative.
For a comprehensive introduction to Ehrhart positivity, we refer the reader to the survey by Liu~\cite{FuLiu19}.

The \emph{Bernoulli numbers of the second kind} $\mathcal{B}_n$ are defined via the generating function
$$\frac{x}{\exp(x)-1}=\sum_{n=0}^{\infty} \mathcal{B}_n \frac{(-1)^n x^n}{n!}.$$
The first few values of $\mathcal{B}_n$ are as follows:
$$\mathcal{B}_0=1,\quad \mathcal{B}_1=\frac{1}{2},\quad \mathcal{B}_2=\frac{1}{6},\quad \mathcal{B}_4=-\frac{1}{30},\quad \mathcal{B}_6=\frac{1}{42},$$
and $\mathcal{B}_n=0$ for all odd integers $n\geq 3$.

Let $\mathfrak{S}_d$ denote the symmetric group on $d$ elements. For a permutation $\pi \in \mathfrak{S}_d$, a \emph{descent} is an index $j \in \{1, \dots, d-1\}$ such that $\pi_j > \pi_{j+1}$. We denote by $\mathrm{des}(\pi)$ the number of descents of $\pi$. The \emph{Eulerian number} $A(d,i)$ counts the number of permutations in $\mathfrak{S}_d$ with exactly $i-1$ descents. The $d$-th \emph{Eulerian polynomial} is then defined as
$$A_d(z) = \sum_{i=1}^d A(d,i) z^i = \sum_{\pi \in \mathfrak{S}_d} z^{1+\mathrm{des}(\pi)}.$$
The polynomial $g(t) = \sum_{j=0}^{d+2} C_j t^{d+2-j}$ is defined via the generating function
$$\sum_{t=0}^{\infty} g(t) z^t = \frac{A_d(z)}{(1-z)^{d+3}}.$$

\begin{lem}\label{lem:coefficient_formula}
Let $d \in \mathbb{Z}$ with $d \ge 4$.
Then for $2 \le j \le d$, the coefficient $C_j$ is given by
$$C_j = \frac{1}{(d+1)(d+2)} \binom{d+2}{j} \Big( (1-j)\mathcal{B}_j + j \mathcal{B}_{j-1} \Big).$$
Furthermore, the remaining coefficients are
$$C_0=\frac{1}{(d+1)(d+2)},\quad C_1=\frac{1}{d+1},\quad C_{d+1} = \mathcal{B}_d - \mathcal{B}_{d+1},\quad \text{and}\quad C_{d+2}=0.$$
\end{lem}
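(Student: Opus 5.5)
The plan is to identify $g(t)$ explicitly as a combination of power sums and then read off its coefficients from Faulhaber's formula. The starting point is the classical identity
$$\sum_{t\ge 0} t^d z^t=\frac{A_d(z)}{(1-z)^{d+1}},$$
valid with the normalization $A_d(z)=\sum_{\pi}z^{1+\mathrm{des}(\pi)}$ used here, and easily checked by applying $(z\frac{d}{dz})^d$ to $\frac{1}{1-z}$. Dividing by $(1-z)^2=\bigl(\sum_{k\ge0}(k+1)z^k\bigr)^{-1}$ gives a convolution:
$$g(t)=\sum_{s=0}^{t}(t+1-s)\,s^d=(t+1)S_d(t)-S_{d+1}(t),\qquad S_m(t):=\sum_{s=0}^{t}s^m.$$

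Next I will invoke Faulhaber's formula in the convention $\mathcal{B}_1=+\tfrac12$, which is the one matching the generating function for $\mathcal{B}_n$ in the paper. In this convention it sums up to $t$ inclusive:
$$S_m(t)=\frac{1}{m+1}\sum_{k=0}^{m}\binom{m+1}{k}\mathcal{B}_k\,t^{m+1-k},$$
and $S_m$ has zero constant term for $m\ge1$.

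Substituting, the coefficient $C_j$ of $t^{d+2-j}$ receives three contributions:
\begin{itemize}
\item $t\,S_d(t)$ contributes $\frac{1}{d+1}\binom{d+1}{j}\mathcal{B}_j$;
\item $S_d(t)$ contributes $\frac{1}{d+1}\binom{d+1}{j-1}\mathcal{B}_{j-1}$;
\item $-S_{d+1}(t)$ contributes $-\frac{1}{d+2}\binom{d+2}{j}\mathcal{B}_j$.
\end{itemize}
To put these over the common factor $\binom{d+2}{j}/((d+1)(d+2))$, I rewrite
$$\binom{d+1}{j}=\frac{d+2-j}{d+2}\binom{d+2}{j},\qquad \binom{d+1}{j-1}=\frac{j}{d+2}\binom{d+2}{j}.$$
Then the coefficient of $\mathcal{B}_j$ becomes $(d+2-j)-(d+1)=1-j$, and the coefficient of $\mathcal{B}_{j-1}$ becomes $j$. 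This is the stated formula for $2\le j\le d$, where all three terms are present.

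The boundary cases need the index ranges checked separately, and this bookkeeping is the main (mild) obstacle.
\begin{itemize}
\item For $j=0$ only the $\mathcal{B}_0$ terms appear, giving $\frac{1}{d+1}-\frac{1}{d+2}=\frac{1}{(d+1)(d+2)}$.
\item For $j=1$ the general computation still applies and gives $\frac{(d+2)(0\cdot\mathcal{B}_1+1\cdot\mathcal{B}_0)}{(d+1)(d+2)}=\frac{1}{d+1}$.
\item For $j=d+1$ the term $t\,S_d$ would require $k=d+1>d$, so it is absent. The linear coefficient of $S_d$ is $\frac{1}{d+1}\binom{d+1}{d}\mathcal{B}_d=\mathcal{B}_d$, and that of $S_{d+1}$ is $\mathcal{B}_{d+1}$. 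Hence $C_{d+1}=\mathcal{B}_d-\mathcal{B}_{d+1}$.
\item For $j=d+2$ the constant term vanishes, since $S_d$ and $S_{d+1}$ have no constant term.
\end{itemize}

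The only genuine pitfall is the sign convention for $\mathcal{B}_1$. I will verify the Faulhaber convention against the defining series $x/(e^x-1)$ with alternating signs, for instance by checking $S_1(t)=\tfrac{t^2}{2}+\tfrac{t}{2}$.
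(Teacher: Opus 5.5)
Your proposal is correct and follows the paper's proof essentially step for step. Like the paper, you use the Eulerian identity, the convolution giving $g(t)=(t+1)S_d(t)-S_{d+1}(t)$, and Faulhaber's formula with $\mathcal{B}_1=+\tfrac12$, then extract coefficients and treat $j=0,1,d+1,d+2$ separately. Your explicit binomial rewriting and your observation that the $t\,S_d$ term is absent at $j=d+1$, so the general formula does not apply there, fill in details the paper leaves implicit.
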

\begin{proof}
The classical identity for the Eulerian polynomial (e.g., see~\cite[Proposition 1.1.4]{RP.Stanley}) is
$$\sum_{t=0}^{\infty} t^d z^t = \frac{A_d(z)}{(1-z)^{d+1}}.$$
Multiplying the generating function by $\frac{1}{(1-z)^2}$ corresponds to taking the second-order partial sums of the coefficients. Thus, $g(t)$ is given by
$$g(t) = \sum_{j=0}^{t} \sum_{k=0}^{j} k^d = \sum_{k=0}^{t} (t-k+1)k^d = (t+1)\sum_{k=0}^{t} k^d - \sum_{k=0}^{t} k^{d+1}.$$
It is clear that $C_{d+2}=g(0)=0$.
The classical Faulhaber's formula (e,g., see~\cite[Chapter 4]{Book-Numbers}) states
$$\sum_{k=0}^{t} k^m = \frac{1}{m+1} \sum_{i=0}^{m} \binom{m+1}{i} \mathcal{B}_i t^{m+1-i}.$$
Substituting this into the expression for $g(t)$, we obtain
$$g(t) = \frac{t+1}{d+1} \sum_{i=0}^{d} \binom{d+1}{i} \mathcal{B}_i t^{d+1-i} - \frac{1}{d+2} \sum_{i=0}^{d+1} \binom{d+2}{i} \mathcal{B}_i t^{d+2-i}.$$
Therefore, we get
$$C_0=\frac{1}{(d+1)(d+2)}\quad \text{and}\quad C_1=\frac{1}{d+1}.$$
We can extract the coefficient $C_j$ for $2 \le j \le d$ by matching the terms of degree $d+2-j$ as
\begin{align*}
C_j &= \frac{1}{d+1} \binom{d+1}{j} \mathcal{B}_j + \frac{1}{d+1} \binom{d+1}{j-1}\mathcal{B}_{j-1} - \frac{1}{d+2} \binom{d+2}{j} \mathcal{B}_j \\
&=  \left(\frac{1}{d+1} \binom{d+1}{j} - \frac{1}{d+2} \binom{d+2}{j} \right)\mathcal{B}_j + \frac{1}{d+1} \binom{d+1}{j-1} \mathcal{B}_{j-1} \\
&= \frac{1}{(d+1)(d+2)} \binom{d+2}{j} \Big( (1-j)\mathcal{B}_j + j \mathcal{B}_{j-1} \Big).
\end{align*}
We now find the coefficient $C_{d+1}$ of the linear term $t$. The linear term of $g(t)$ arises exactly from $i=d$ in the first summation and $i=d+1$ in the second summation. Thus, we obtain
\begin{align*}
C_{d+1} = \frac{1}{d+1} \binom{d+1}{d} \mathcal{B}_d - \frac{1}{d+2} \binom{d+2}{d+1} \mathcal{B}_{d+1} = \mathcal{B}_d - \mathcal{B}_{d+1}.
\end{align*}
The conclusion follows.
\end{proof}

\begin{lem}\label{neg-terms}
The coefficient $C_j$ is negative if and only if $j$ satisfies one of the following conditions:
\begin{enumerate}
\item[\textup{(i)}] $j \equiv 1 \pmod 4$ with $j \ge 5$;
\item[\textup{(ii)}] $j \equiv 2 \pmod 4$ with $j \ge 6$.
\end{enumerate}
Equivalently, the negative coefficients appear exactly at $j \in \{5, 6, 9, 10, 13, 14, 17, 18, \dots\}$.
\end{lem}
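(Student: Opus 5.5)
We read off signs from Lemma~\ref{lem:coefficient_formula}. The prefactor $\frac{1}{(d+1)(d+2)}\binom{d+2}{j}$ is positive, so for $2\le j\le d$ the sign of $C_j$ is the sign of $E_j:=(1-j)\mathcal{B}_j + j\mathcal{B}_{j-1}$. The boundary cases are immediate: $C_0,C_1>0$ and $C_{d+2}=0$. For $C_{d+1}=\mathcal{B}_d-\mathcal{B}_{d+1}$, note that exactly one of $d,d+1$ is odd and at least $5$, so its Bernoulli number vanishes. Hence $C_{d+1}$ equals $\mathcal{B}_d$ if $d$ is even and $-\mathcal{B}_{d+1}$ if $d$ is odd. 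Below we check that this agrees with the value $E_{d+1}$ the generic formula would give, so it obeys the same classification with $j=d+1$.

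The key input is the classical sign pattern $\mathcal{B}_{2m}\neq 0$ with $\operatorname{sign}\mathcal{B}_{2m}=(-1)^{m+1}$ for $m\ge1$, which follows from $\mathcal{B}_{2m}=(-1)^{m+1}2(2m)!\zeta(2m)/(2\pi)^{2m}$. We also use $\mathcal{B}_n=0$ for odd $n\ge3$. We then split into cases according to the parity of $j$.

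\emph{Case $j$ even, $j=2m\ge2$.} Since $j-1\ge3$ is odd unless $j=2$, we have $E_j=(1-j)\mathcal{B}_j$. For $j=2$ this gives $E_2=-\mathcal{B}_2+2\mathcal{B}_1=-\frac16+1>0$. For $j\ge4$, $\operatorname{sign}E_j=-\operatorname{sign}\mathcal{B}_{2m}=(-1)^m$, which is negative exactly when $m$ is odd, i.e.\ when $j\equiv 2\pmod 4$ with $j\ge 6$.

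\emph{Case $j$ odd, $j=2m+1\ge3$.} Here $\mathcal{B}_j=0$, so $E_j=j\mathcal{B}_{2m}$, which has sign $(-1)^{m+1}$. This is negative exactly when $m$ is even with $m\ge2$, i.e.\ when $j\equiv1\pmod 4$ with $j\ge5$. At $j=3$ the sign is positive.

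For $C_{d+1}$: if $d$ is even, $C_{d+1}=\mathcal{B}_d=E_{d+1}/(d+1)$, matching the odd case with $j=d+1$. If $d$ is odd, $C_{d+1}=-\mathcal{B}_{d+1}$, and this has the sign of $(1-(d+1))\mathcal{B}_{d+1}=E_{d+1}$, matching the even case with $j=d+1\ge6$.

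Combining the cases gives exactly the stated index set. The only nontrivial ingredient is the alternating sign and nonvanishing of the even Bernoulli numbers, which we cite as classical. The remaining care is the bookkeeping at $j=2$, where $\mathcal{B}_1\ne0$ contributes, and at the endpoint $j=d+1$, which is handled separately from the general formula as above.
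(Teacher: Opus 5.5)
Your case analysis is correct and is essentially the paper's proof. The sign of $C_j$ is that of $(1-j)\mathcal{B}_j+j\mathcal{B}_{j-1}$, a parity split on $j$ kills one of the two Bernoulli terms, and the alternating signs of the even $\mathcal{B}_{2m}$ finish the job, with $j=2$ and $j=d+1$ handled separately. Your explicit reduction of $C_{d+1}$ to $\mathcal{B}_d$ (for $d$ even) or $-\mathcal{B}_{d+1}$ (for $d$ odd) actually carries out what the paper only describes as ``a similar parity analysis.''

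There is one genuine error, at the top index. You record $C_{d+2}=0$ and then assert that combining the cases gives exactly the stated index set. That is false whenever $d+2\equiv 1$ or $2\pmod 4$, i.e.\ $d\equiv 0$ or $3\pmod 4$. For example, with $d=4$ the index $j=6$ satisfies condition (ii), yet $C_6=C_{d+2}=0$ is not negative. So the ``if'' direction of the lemma, read over all $0\le j\le d+2$, fails at $j=d+2$, and your final sentence inherits this. The paper's proof only ever treats $2\le j\le d+1$. What your argument actually establishes is the characterization for $0\le j\le d+1$, together with $C_{d+2}=0$, and you should state the conclusion that way. This restricted version is all that Theorem~\ref{thm:join_not_preserve_positivity} uses, since it only needs one of $C_{d-1},C_d,C_{d+1}$ to be negative.
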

\begin{proof}
Recall the properties of the second Bernoulli numbers $\mathcal{B}_k$ for $k \ge 2$, we know that $\mathcal{B}_k = 0$ for odd $k$, and for an even integer $k$:
$$\mathcal{B}_k < 0 \iff k \equiv 0 \pmod 4 \quad \text{and} \quad \mathcal{B}_k > 0 \iff k \equiv 2 \pmod 4.$$
For $2 \le j \le d$, let $V_j = (1-j)\mathcal{B}_j + j \mathcal{B}_{j-1}$.
Since the binomial factor $\frac{1}{(d+1)(d+2)}\binom{d+2}{j}>0$, the sign of $C_j$ is identical to the sign of $V_j$.
We determine when $V_j < 0$ by considering cases for $j$.

Case 1: $j$ is odd with $j \ge 3$.
We now have $\mathcal{B}_j = 0$. Thus, $V_j = j \mathcal{B}_{j-1}$ and
$$V_j < 0 \iff \mathcal{B}_{j-1} < 0 \iff j-1 \equiv 0 \pmod 4 \iff j \equiv 1 \pmod 4.$$
The first such $j$ is $5$.

Case 2: $j$ is even with $j \ge 4$. Since $j-1 \ge 3$ is odd, $\mathcal{B}_{j-1} = 0$. Thus, $V_j = (1-j)\mathcal{B}_j$. Because $1-j < 0$, we obtain
$$V_j < 0 \iff \mathcal{B}_j > 0 \iff j \equiv 2 \pmod 4.$$
The first such $j$ is $j = 6$. (Note that for $j=2$, $V_2 = -\mathcal{B}_2 + 2\mathcal{B}_1 = 5/6 > 0$).

Case 3: For $j = d+1$, the coefficient of the linear term is $C_{d+1} = \mathcal{B}_d - \mathcal{B}_{d+1}$. A similar parity analysis on $d$ yields the identical modular conditions for $C_{d+1} < 0$.

Therefore, $C_j < 0 \iff j \equiv 1 \pmod 4$ (with $j \ge 5$) or $j \equiv 2 \pmod 4$ (with $j \ge 6$).
\end{proof}

\begin{thm}\label{thm:join_not_preserve_positivity}
Let $\mathcal Q = [0, 1] \subset \mathbb{R}$ be the unit interval. For any $d$-dimensional Ehrhart positive lattice polytope $\mathcal{P}$ with $d \geq 4$, there exists a positive integer $r_0$ such that for all integers $r \geq r_0$, the join $r\mathcal{P} * \mathcal Q$ is not Ehrhart positive. In particular, Ehrhart positivity is not preserved under the join operation.
\end{thm}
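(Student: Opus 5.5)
Since $\mathrm{Ehr}(\mathcal Q,x)=1/(1-x)^2$ for the unit interval, Proposition \ref{Prop-Join-Properties}(iv) gives
$$\mathrm{Ehr}(r\mathcal P*\mathcal Q,x)=\frac{\mathrm{Ehr}(r\mathcal P,x)}{(1-x)^2}.$$
Multiplying by $(1-x)^{-2}$ takes second-order partial sums, as in the proof of Lemma \ref{lem:coefficient_formula}. Hence
$$i(r\mathcal P*\mathcal Q,t)=\sum_{k=0}^{t}(t-k+1)\,i(\mathcal P,rk).$$
Write $i(\mathcal P,s)=\sum_{m=0}^{d}a_m s^m$. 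Here every $a_m\ge 0$ by Ehrhart positivity, $a_0=1$, and $a_d=\operatorname{vol}(\mathcal P)>0$. Then
$$i(r\mathcal P*\mathcal Q,t)=\sum_{m=0}^{d} a_m r^m\, g_m(t),\qquad g_m(t)=\sum_{k=0}^{t}(t-k+1)k^m,$$
with the convention $0^0=1$. Each $g_m$ is a polynomial in $t$ of degree $m+2$ that does not depend on $r$. Since $d\ge 1$, we have $0^d=0$. So $g_d$ agrees with the polynomial $g$ defined before Lemma \ref{lem:coefficient_formula}, whose generating function is $A_d(z)/(1-z)^{d+3}$. Its coefficients are therefore the $C_j$ computed there.

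Fix the exponent $d+2-j$ with $j=5$, i.e.\ look at the coefficient of $t^{d-3}$. Because $d\ge4$, this is a legitimate coefficient with $2\le 5\le d+1$. By Lemmas \ref{lem:coefficient_formula} and \ref{neg-terms}, $C_5<0$. Concretely, $C_5=\frac{1}{(d+1)(d+2)}\binom{d+2}{5}\,5\mathcal B_4$ when $d\ge5$, and $C_5=\mathcal B_4-\mathcal B_5=-\tfrac1{30}$ when $d=4$. The coefficient of $t^{d-3}$ in $i(r\mathcal P*\mathcal Q,t)$ is
$$a_d C_5\, r^d+\sum_{m=0}^{d-1} a_m\,[t^{d-3}]g_m(t)\, r^m .$$
This is a polynomial in $r$ of degree exactly $d$, and its leading coefficient $a_dC_5$ is negative. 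The remaining sum is $O(r^{d-1})$, with constants depending only on $\mathcal P$ and $d$. Hence there is an $r_0$ such that this coefficient is negative for all $r\ge r_0$. So $r\mathcal P*\mathcal Q$ is not Ehrhart positive for those $r$.

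For the final sentence of the theorem, observe that $r\mathcal P$ is Ehrhart positive, since its coefficients are $a_m r^m\ge0$. Also $\mathcal Q$ is Ehrhart positive, since $i(\mathcal Q,t)=t+1$. Any concrete choice, such as $\mathcal P$ the unit $4$-cube with $r\ge r_0$, therefore gives two Ehrhart positive polytopes whose join is not Ehrhart positive.

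The only step needing care is identifying $g_d$ with the $g$ of Lemma \ref{lem:coefficient_formula}, including the $k=0$ term, and checking that the index $j=5$ really lies in the range covered by the lemmas when $d=4$. In that case $j=d+1$, so the formula for $C_{d+1}$ applies. The rest is a leading-order asymptotic in $r$.
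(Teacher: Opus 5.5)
Your proof is correct and follows the paper's argument. You write $i(r\mathcal P*\mathcal Q,t)$ as a second cumulative sum, identify the $r^d$-part of each coefficient as $a_d C_j$ via Lemma~\ref{lem:coefficient_formula}, and let $r\to\infty$ using a negative $C_j$ from Lemma~\ref{neg-terms}. The only difference is that you use the single index $j=5$ (the coefficient of $t^{d-3}$) for every $d\ge 4$, with the $C_{d+1}$ formula covering $d=4$, whereas the paper splits on $d \bmod 4$ and uses the coefficient of $t^1$, $t^2$, or $t^3$; your uniform choice is a slight simplification.
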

\begin{proof}
Let $i(\mathcal{P}, t) = 1 + \sum_{i=1}^d a_i t^i$. Note that the leading coefficient $a_d = d! \cdot \operatorname{vol}(\mathcal{P}) > 0$. The Ehrhart polynomial of $r\mathcal{P}$ is
\begin{equation}\label{equation-i(rp)}
 i(r\mathcal{P}, t) = 1 + \sum_{i=1}^d a_i r^i t^i.
\end{equation}
For the unit interval $\mathcal{Q} = [0,1]$, its Ehrhart polynomial is $L(\mathcal{Q}, t) = t + 1$. It is clear that $\mathcal{Q}$ is Ehrhart positive. Its Ehrhart series is given by $\operatorname{Ehr}(\mathcal{Q}, z) = \frac{1}{(1-z)^2}$. Therefore,
$$\operatorname{Ehr}(r\mathcal{P} * \mathcal Q, z) = \operatorname{Ehr}(r\mathcal{P}, z) \cdot \frac{1}{(1-z)^2}.$$
The Ehrhart polynomial of the join is exactly the second cumulative sum of the Ehrhart polynomial of $r\mathcal{P}$:
$$i(r\mathcal{P} * \mathcal Q, t) = \sum_{k=0}^t \sum_{\ell=0}^k i(r\mathcal{P}, \ell).$$
Substituting Equation~\eqref{equation-i(rp)} into the second cumulative sum, we obtain
$$i(r\mathcal{P} * \mathcal Q, t) = \sum_{k=0}^t \sum_{\ell=0}^k 1 + \sum_{i=1}^d a_i r^i \sum_{k=0}^t \sum_{\ell=0}^k \ell^i.$$
By Lemma~\ref{lem:coefficient_formula}, for each $i$, the double sum $\sum_{k=0}^t \sum_{\ell=0}^k \ell^i$ is a polynomial of degree $i+2$ in $t$. In other words, we get
$$i(r\mathcal{P} * \mathcal Q, t) = \frac{1}{2}t^2+\frac{3}{2}t+1 + \sum_{i=1}^d a_i r^i \sum_{j=0}^{i+2} C_j t^{d+2-j},$$
where $C_j$ are the coefficients defined in Lemma~\ref{lem:coefficient_formula}.

We now analyze the asymptotic behavior of the coefficients as $r \to \infty$. The term with the highest power of $r$ comes from the leading term $a_d r^d t^d$ of $i(r\mathcal{P}, t)$. For any degree $m$ ($0 \leq m \leq d+2$), the coefficient of $t^m$ in $i(r\mathcal{P} * \mathcal Q, t)$ has the form
$$C_{d+2-m} \cdot a_d r^d + \mathcal{O}(r^{d-1}).$$
The key observation is that by Lemma~\ref{neg-terms}, for all $d \ge 4$, at least one of $C_{d-1}, C_d$, or $C_{d+1}$ is negative. Specifically:

If $d \equiv 1 \text{ or } 2 \pmod 4$ ($d \ge 5$), then $C_d < 0$, which means the coefficient of $t^2$ is negative for large $r$.

If $d \equiv 0 \pmod 4$ ($d \ge 4$), then $C_{d+1} < 0$, which means the coefficient of $t^1$ is negative for large $r$.

If $d \equiv 3 \pmod 4$ ($d \ge 7$), then $C_{d-1} < 0$, which means the coefficient of $t^3$ is negative for large $r$.

In all cases, there exists a specific degree $m \in \{1, 2, 3\}$ such that the leading term in $r$ of the $t^m$ coefficient is negative. Thus, there exists $r_0 > 0$ such that for all $r \ge r_0$, this coefficient is negative, and $r\mathcal{P} * \mathcal Q$ is not Ehrhart positive.
\end{proof}

\begin{exa}\label{Example-Join-Ehrhartpositive}
Let $\mathcal{P} = [0,1]^4$ denote the $4$-dimensional unit cube, whose Ehrhart polynomial is given by
$$i(\mathcal{P}, t) = (t+1)^4 =t^4+4t^3+6t^2+ 4t +1.$$
Let $\mathcal{Q} = [0,1]$.
Then the Ehrhart series of the join $100\mathcal{P} * \mathcal{Q}$ is
\begin{align*}
\operatorname{Ehr}(100\mathcal{P} * \mathcal Q, x) &= \frac{1}{(1-x)^2}\cdot\sum_{t=0}^{\infty} (100t+1)^4 x^t\\& =\frac{96059601x^4+1087941196x^3+1111938806x^2+104060396x+1}{(1-x)^7}.
\end{align*}
Therefore, we can explicitly determine the Ehrhart polynomial of the join $100\mathcal{P} * \mathcal{Q}$ as follows:
\begin{align*}
i(100\mathcal{P} * \mathcal{Q}, t) &=\frac{10000000}{3} t^{6}+20200000 t^{5} +\frac{128015000}{3} t^{4} +\frac{105060200}{3} t^{3}
\\ & \quad\quad + \frac{12050401}{2} t^{2} - \frac{19139191}{6} t +1.
\end{align*}
Since the linear coefficient is negative, it immediately follows that this polytope is not Ehrhart positive.
\end{exa}

\section{Preliminary knowledge}\label{Section-2-Preliminary}

This section introduces the definition and related results for the Cartesian product of two polytopes.
We will see that it exhibits many properties analogous to the join of two polytopes.

\begin{dfn}
Let $\mathcal{P}\subseteq\mathbb{R}^{p}$ and $\mathcal{Q}\subseteq\mathbb{R}^{q}$. Then the \emph{Cartesian product} of $\mathcal{P}$ and $\mathcal{Q}$ is defined as
$$ \mathcal{P}\times \mathcal{Q} = \left\{(x,y)\in\mathbb{R}^{p+q} \bigm| x\in \mathcal{P},\; y\in \mathcal{Q}\right\}. $$
\end{dfn}

\begin{prop}{\em \cite[Lemma 2.3]{Hibi-Higashitani-Tsuchiya-Yoshida}}\label{prop_Cartesian}
Let $\mathcal{P}\subseteq\mathbb{R}^p$ and $\mathcal{Q}\subseteq\mathbb{R}^q$ be lattice polytopes. The Cartesian product $\mathcal{P}\times \mathcal{Q}$ is a lattice polytope of dimension $p+q$.
Its Ehrhart polynomial satisfies
\begin{align}\label{Equation-product-Hibi}
 i(\mathcal{P}\times \mathcal{Q},t) = i(\mathcal{P},t)\cdot i(\mathcal{Q},t).
\end{align}
\end{prop}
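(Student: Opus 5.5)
We need to prove Proposition \ref{prop_Cartesian}: the Cartesian product is a lattice polytope of dimension $p+q$ and its Ehrhart polynomial is the product of the two Ehrhart polynomials. (Here $\mathcal{P}$ and $\mathcal{Q}$ are taken full-dimensional, as in Proposition \ref{Prop-Join-Properties}; in general the dimension is $\dim\mathcal{P}+\dim\mathcal{Q}$.)

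We first establish the geometry. The set $\mathcal{P}\times\mathcal{Q}$ is convex and compact, since it is a product of such sets. Write $\mathcal{P}=\operatorname{conv}(V)$ and $\mathcal{Q}=\operatorname{conv}(W)$ with $V\subset\mathbb{Z}^p$ and $W\subset\mathbb{Z}^q$ finite. We claim $\mathcal{P}\times\mathcal{Q}=\operatorname{conv}(V\times W)$.
\begin{itemize}
\item The inclusion $\supseteq$ is clear from convexity of the product.
\item For $\subseteq$, take $x=\sum_i\lambda_i v_i$ and $y=\sum_j\mu_j w_j$ as convex combinations. Then $(x,y)=\sum_{i,j}\lambda_i\mu_j(v_i,w_j)$, and the weights $\lambda_i\mu_j$ are nonnegative and sum to $1$.
\end{itemize}
So $\mathcal{P}\times\mathcal{Q}$ is the convex hull of finitely many integer points, hence a lattice polytope. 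For the dimension, the affine hull of the product is the product of the affine hulls. Concretely, take affinely independent sets $\{v_0,\dots,v_p\}\subset\mathcal{P}$ and $\{w_0,\dots,w_q\}\subset\mathcal{Q}$. Then the points $(v_0,w_0),(v_i,w_0),(v_0,w_j)$ are $p+q+1$ affinely independent points of the product. Since the ambient space is $\mathbb{R}^{p+q}$, the dimension is exactly $p+q$.

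For the counting formula, the key observation is that dilation commutes with products: $t(\mathcal{P}\times\mathcal{Q})=t\mathcal{P}\times t\mathcal{Q}$ for every $t\ge 0$. This holds because $(tx,ty)$ ranges over exactly the pairs in $t\mathcal{P}\times t\mathcal{Q}$. The integer lattice also splits as $\mathbb{Z}^{p+q}=\mathbb{Z}^p\times\mathbb{Z}^q$. Hence
$$t(\mathcal{P}\times\mathcal{Q})\cap\mathbb{Z}^{p+q}=(t\mathcal{P}\cap\mathbb{Z}^p)\times(t\mathcal{Q}\cap\mathbb{Z}^q).$$
Taking cardinalities gives $i(\mathcal{P}\times\mathcal{Q},t)=i(\mathcal{P},t)\,i(\mathcal{Q},t)$ for all positive integers $t$. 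This is an equality of integer-valued functions on the positive integers. By Ehrhart's theorem both sides are polynomials, and polynomials agreeing at infinitely many points are identical. The identity therefore holds as polynomials, and the constant terms match, both being $1$.

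There is no real obstacle here; the only step needing care is the dimension count.
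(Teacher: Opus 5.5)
The paper gives no proof of this statement; it is quoted from \cite[Lemma 2.3]{Hibi-Higashitani-Tsuchiya-Yoshida}, so there is no in-paper argument to compare against.

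Your proof is correct and self-contained. The splitting $t(\mathcal{P}\times\mathcal{Q})\cap\mathbb{Z}^{p+q}=(t\mathcal{P}\cap\mathbb{Z}^p)\times(t\mathcal{Q}\cap\mathbb{Z}^q)$ gives the product formula for every positive integer $t$. Agreement at infinitely many $t$ then makes it an identity of polynomials.

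Your caveat about dimension is a real correction to the statement as written. The claim $\dim(\mathcal{P}\times\mathcal{Q})=p+q$ requires $\mathcal{P}$ and $\mathcal{Q}$ to be full-dimensional in $\mathbb{R}^p$ and $\mathbb{R}^q$; in general the dimension is $\dim\mathcal{P}+\dim\mathcal{Q}$.

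The same bijection, together with $(\mathcal{P}\times\mathcal{Q})^\circ=\mathcal{P}^\circ\times\mathcal{Q}^\circ$, also proves the interior identity $i(\mathcal{P}^\circ\times\mathcal{Q}^\circ,t)=i(\mathcal{P}^\circ,t)\,i(\mathcal{Q}^\circ,t)$. The paper asserts that identity right after the proposition and uses it in its Gorenstein argument, so your route covers it as well.
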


Recall that $i(\mathcal{P}^{\circ},t)$ enumerates the lattice points strictly inside $t\mathcal{P}$, and that $(\mathcal{P} \times \mathcal{Q})^\circ = \mathcal{P}^\circ \times \mathcal{Q}^\circ$. Consequently, identity \eqref{Equation-product-Hibi} extends naturally to relative interiors:
\begin{equation}\label{Equation-product-circ}
  i(\mathcal{P}^{\circ}\times \mathcal{Q}^{\circ},t) = i(\mathcal{P}^{\circ},t)\cdot i(\mathcal{Q}^{\circ},t).
\end{equation}

\begin{dfn}
Let $\mathcal{P} \subset \mathbb{R}^p$ be a lattice polytope.
\begin{itemize}
    \item We say that $\mathcal{P}$ possesses the \emph{integer decomposition property} (IDP) if, for any positive integer $k$, every lattice point $z \in k\mathcal{P} \cap \mathbb{Z}^p$ can be expressed as a sum $z = z_1 + \dots + z_k$, where each $z_i \in \mathcal{P} \cap \mathbb{Z}^p$.

    \item The polytope $\mathcal{P}$ is called \emph{very ample} if it satisfies the integer decomposition property for all sufficiently large $k$; that is, for all $k \gg 0$, every lattice point in $k\mathcal{P} \cap \mathbb{Z}^p$ admits such a decomposition into lattice points of $\mathcal{P}$.

    \item We call $\mathcal{P}$ a \emph{spanning polytope} if the integral affine span of its lattice points, denoted by $\operatorname{aff}_{\mathbb{Z}}(\mathcal{P} \cap \mathbb{Z}^p)$, coincides with the lattice $\mathbb{Z}^p$.
\end{itemize}
\end{dfn}

For the convenience of the reader, we briefly recall the notion of the integer affine span. Let $S \subseteq \mathbb{Z}^p$ be a non-empty set of lattice points. The \emph{integer affine span} of $S$, denoted by $\operatorname{aff}_{\mathbb{Z}}(S)$, is the set of all finite integer affine combinations of elements in $S$:
$$ \operatorname{aff}_{\mathbb{Z}}(S) = \left\{ \sum_{i=1}^k \lambda_i x_i \;\middle|\; k \in \mathbb{Z}_{\ge 1}, \, x_i \in S, \, \lambda_i \in \mathbb{Z}, \text{ and } \sum_{i=1}^k \lambda_i = 1 \right\}. $$

The following lemma is straightforward to verify. We will rely on this equivalent characterization in our subsequent proof of the spanning property.
\begin{lem}\label{lem-spanning-equ}
   $\operatorname{aff}_{\mathbb{Z}}(S) = \mathbb{Z}^p$ if and only if the origin $\mathbf{0} \in \mathbb{Z}^p$ can be expressed as an integer affine combination of points in $S$, and every standard basis vector $e_j$ of $\mathbb{Z}^p$ can be expressed as an integer linear combination of points in $S$ with coefficients summing to zero.
\end{lem}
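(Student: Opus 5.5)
The plan is to prove the two implications separately, using the fact that $\operatorname{aff}_{\mathbb{Z}}(S)$ is the smallest affine sublattice containing $S$. Write $L_S=\{\sum_i \mu_i x_i : x_i\in S,\ \mu_i\in\mathbb{Z},\ \sum_i\mu_i=0\}$ for the group of integer linear combinations of points of $S$ with coefficients summing to zero. This is exactly the group of differences $\{a-b : a,b\in \operatorname{aff}_{\mathbb{Z}}(S)\}$. In particular, whenever $a\in\operatorname{aff}_{\mathbb{Z}}(S)$, we have $\operatorname{aff}_{\mathbb{Z}}(S)=a+L_S$. I would verify this identity first. The check is formal: an affine combination minus an affine combination has coefficient sum $0$, and conversely $a+\sum\mu_i x_i$ with $\sum\mu_i=0$ is again an integer combination with coefficient sum $1$.

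For the forward direction, assume $\operatorname{aff}_{\mathbb{Z}}(S)=\mathbb{Z}^p$. Then $\mathbf{0}\in\operatorname{aff}_{\mathbb{Z}}(S)$, so $\mathbf{0}$ is an integer affine combination of points of $S$. Also $e_j\in\operatorname{aff}_{\mathbb{Z}}(S)$, so $e_j=e_j-\mathbf{0}$ is a difference of two elements of $\operatorname{aff}_{\mathbb{Z}}(S)$. By the identity above, $e_j\in L_S$, which is exactly the claimed representation with coefficients summing to zero.

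For the converse, assume $\mathbf{0}\in\operatorname{aff}_{\mathbb{Z}}(S)$ and $e_j\in L_S$ for every $j$. Since $L_S$ is closed under integer linear combinations, $L_S\supseteq \mathbb{Z}e_1+\cdots+\mathbb{Z}e_p=\mathbb{Z}^p$. Hence $\operatorname{aff}_{\mathbb{Z}}(S)=\mathbf{0}+L_S=\mathbb{Z}^p$; the reverse inclusion is trivial because $S\subseteq\mathbb{Z}^p$.

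The main point needing care is closure of $L_S$ under addition and integer scaling, and closure of $\operatorname{aff}_{\mathbb{Z}}(S)$ under translation by $L_S$. Both follow by concatenating the finite combinations and adding their coefficient sums. I do not expect any genuine obstacle here; the argument is bookkeeping with coefficient sums.
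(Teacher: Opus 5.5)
Your proof is correct. The identity $\operatorname{aff}_{\mathbb{Z}}(S)=a+L_S$ for any $a\in\operatorname{aff}_{\mathbb{Z}}(S)$, where $L_S$ is the group of integer combinations of points of $S$ with zero coefficient sum, gives both directions at once; the paper offers no argument beyond calling the lemma straightforward to verify, and your coefficient-sum bookkeeping is exactly that routine check.
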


We now give some definitions related to triangulations of polytopes.
For a detailed treatment on triangulations, we refer to~\cite{DeLoera2010}.

\begin{dfn}\label{Dfn-triangulation}
A \emph{triangulation} of a  $p$-polytope $\mathcal{P}$ is a finite collection $\mathcal{T}_\mathcal{P}$ of $p$-simplices with the following properties: 1.) $\mathcal{P} = \bigcup_{\Delta \in \mathcal{T}_\mathcal{P}} \Delta$. 2.) For every $\Delta_1, \Delta_2 \in \mathcal{T}_\mathcal{P}$, $\Delta_1 \cap \Delta_2$ is a face common to both $\Delta_1$ and $\Delta_2$.
\end{dfn}

\begin{dfn}
Let $\mathcal{P}$ be a lattice polytope. We define the following properties for a triangulation $\mathcal{T}$ of $\mathcal{P}$:
\begin{itemize}
\item $\mathcal{T}$ is said to be \emph{unimodular} if every simplex in $\mathcal{T}$ has a normalized volume of $1$.

\item $\mathcal{T}$ is said to be \emph{regular} if it is obtained by projecting the lower hull of a lifting of the vertices of $\mathcal{P}$.

\item $\mathcal{T}$ is termed \emph{flag} if its underlying simplicial complex is a flag complex, i.e., every minimal non-face has cardinality two.

\item $\mathcal{T}$ is called a \emph{quadratic triangulation} if it is simultaneously regular, unimodular, and flag.
\end{itemize}
\end{dfn}

For the unimodular triangulation, a definition is as follows. A lattice simplex $\Delta$ with vertices $v_0, \ldots, v_m$ is \emph{unimodular} if the vectors $v_m - v_0, v_{m-1} - v_0, \ldots, v_1 - v_0$ form a basis for the lattice $\mathrm{aff}(\Delta) \cap \mathbb{Z}^p$. A triangulation of a lattice polytope is a \emph{unimodular triangulation} if all its maximal dimensional simplices are unimodular. See~\cite[Definition 9.3.3]{DeLoera2010}.

\begin{thm}{\em \cite[Theorem 2.1]{Ferroni23}}
The following properties are increasing in strength:
\begin{enumerate}
    \item[\textup{(i)}] $\mathcal{P}$ is spanning.
    \item[\textup{(ii)}] $\mathcal{P}$ is very ample.
    \item[\textup{(iii)}] $\mathcal{P}$ has IDP.
    \item[\textup{(iv)}] $\mathcal{P}$ has a unimodular triangulation.
    \item[\textup{(v)}] $\mathcal{P}$ has a regular unimodular triangulation.
    \item[\textup{(vi)}] $\mathcal{P}$ has a quadratic triangulation.
\end{enumerate}
In other words, if $\mathcal{P}$ satisfies one of these properties, then it satisfies all of the previous ones.
\end{thm}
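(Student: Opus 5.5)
The statement is a chain of implications (i) $\Leftarrow$ (ii) $\Leftarrow \cdots \Leftarrow$ (vi). I will prove each link (k+1) $\Rightarrow$ (k) separately, in the order (vi)$\Rightarrow$(v)$\Rightarrow$(iv)$\Rightarrow$(iii)$\Rightarrow$(ii)$\Rightarrow$(i). Throughout I may assume $\mathcal{P}$ is full-dimensional in $\mathbb{Z}^p$, after replacing $\mathbb{Z}^p$ by the lattice $\operatorname{aff}(\mathcal{P})\cap\mathbb{Z}^p$ and choosing a lattice basis of it.

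\textbf{The three easy links.} (vi)$\Rightarrow$(v) holds by definition, since a quadratic triangulation is regular, unimodular and flag. (v)$\Rightarrow$(iv) is also immediate. (iii)$\Rightarrow$(ii) is immediate from the definitions: IDP asks for decompositions for every $k\ge 1$, and very ampleness only for $k\gg0$.

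\textbf{Unimodular triangulation implies IDP.} This is the main obstacle. Let $z\in k\mathcal{P}\cap\mathbb{Z}^p$. The point $z/k$ lies in some unimodular simplex $\Delta$ of the triangulation, with vertices $v_0,\dots,v_p$. Lift to the cone: the vectors $(v_i,1)$ form a $\mathbb{Z}$-basis of $\mathbb{Z}^{p+1}$. This follows from unimodularity, since the differences $v_i-v_0$ form a basis of $\mathbb{Z}^p$ and a determinant computation gives $\det$ of the lifted vertices equal to $\pm1$. Hence
\[
(z,k)=\sum_{i=0}^{p} \lambda_i (v_i,1),
\]
where the $\lambda_i$ are integers and nonnegative because $(z,k)$ lies in the cone over $\Delta$. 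Their sum is $k$, so $z$ is a sum of $k$ vertices of $\Delta$, which are lattice points of $\mathcal{P}$.

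\textbf{Very ample implies spanning.} Choose $k$ large, and $k+1$ as well, so that both admit decompositions. Write $L=\operatorname{aff}_{\mathbb{Z}}(\mathcal{P}\cap\mathbb{Z}^p)$, and fix a lattice point $x_0\in\mathcal{P}$ (a vertex). For any $y\in\mathbb{Z}^p$ I want to show $y\in L$.

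Since $\mathcal{P}$ is full-dimensional, pick an interior point $c$ of $\mathcal{P}$. For $k$ large, both $kc+y$ and $kc$ stay in the interior of $k\mathcal{P}$. However, $kc$ need not be integral, so instead I pick a lattice point $w$ deep inside $k\mathcal{P}$, e.g.\ near $kc$, such that $w+y$ is also in $k\mathcal{P}$. Such a $w$ exists for $k\gg0$ because $k\mathcal{P}$ contains a ball of radius growing linearly in $k$.

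By very ampleness, $w=\sum_{i=1}^{k} a_i$ and $w+y=\sum_{i=1}^{k} b_i$ with $a_i,b_i\in\mathcal{P}\cap\mathbb{Z}^p$. Then
\[
y=\sum_i b_i-\sum_i a_i,
\]
an integer combination with coefficient sum $0$, and so $x_0+y\in L$. By Lemma \ref{lem-spanning-equ}, or directly, this shows $L=\mathbb{Z}^p$.

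The one place needing care here is the restriction to the affine lattice in the non-full-dimensional case, which the initial reduction handles.
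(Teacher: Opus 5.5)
The paper gives no proof of this statement; it simply quotes \cite[Theorem 2.1]{Ferroni23}. Your proposal is therefore a self-contained replacement for a citation rather than an alternative to an argument in the text, and it is correct.

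The links (vi)$\Rightarrow$(v)$\Rightarrow$(iv) and (iii)$\Rightarrow$(ii) are indeed definitional. Your proof of (iv)$\Rightarrow$(iii) is the standard cone argument. The lifted vertices $(v_i,1)$ of a unimodular simplex form a $\mathbb{Z}$-basis of $\mathbb{Z}^{p+1}$; this is the same $|\det\widehat{V}|=1$ criterion the paper later uses in Theorem~\ref{Theorem-Unimodular-Triangulation}. Uniqueness of coordinates then forces the integer coefficients of $(z,k)$ to be $k$ times the barycentric coordinates of $z/k$, hence nonnegative with sum $k$. Your proof of (ii)$\Rightarrow$(i), subtracting the decompositions of $w$ and $w+y$, is also sound. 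It needs $k$ to be chosen after $y$, large enough both for very ampleness and for the inscribed ball of $k\mathcal{P}$ to contain $w$ and $w+y$, and your ball argument supplies this.

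What your write-up adds beyond the citation is that it shows why the reduction to full dimension matters. Under the paper's literal definition of spanning, $\operatorname{aff}_{\mathbb{Z}}(\mathcal{P}\cap\mathbb{Z}^p)=\mathbb{Z}^p$, the implication to (i) fails for lower-dimensional polytopes. For example, the segment $\operatorname{conv}\{(0,0),(1,0)\}\subset\mathbb{R}^2$ has a quadratic triangulation but is not spanning in that literal sense. So your initial reduction amounts to reading ``spanning'' relative to $\operatorname{aff}(\mathcal{P})\cap\mathbb{Z}^p$, which is the intended meaning.

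Two minor cleanups:
\begin{enumerate}
\item $\operatorname{aff}(\mathcal{P})\cap\mathbb{Z}^p$ is an affine lattice, not a lattice. You should first translate by a lattice point $x_0\in\mathcal{P}$ and only then choose a basis. This is harmless: translation by $kx_0\in\mathbb{Z}^p$ matches lattice points of $k\mathcal{P}$, and sums of $k$ lattice points of $\mathcal{P}$, with their translates.
\item The extra dilation $k+1$ in the very-ample step is never used and can be dropped.
\end{enumerate}
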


We refer to~\cite{Hofscheier}, \cite{Bruns2013}, \cite[Section 9]{Balletti2021}, and \cite{Ferroni23} for examples showing that
$$\text{Spanning}\centernot\Longrightarrow \text{Very ample} \centernot\Longrightarrow \text{IDP}.$$
We also refer to \cite{Haase2021} and \cite[Chapter 2.D]{Bruns-Gubeladze} for examples showing
$$\text{IDP}\centernot\Longrightarrow \text{Unimodular triangulation} \centernot\Longrightarrow \text{Regular unimodular triangulation}.$$

The following result shows that the Cartesian product of polytopes preserves the above properties.

\begin{prop}{\em \cite[Proposition 2.2]{Ferroni23}}\label{Proposition-Spanning-IDP-Very}
Let $\mathcal{P}$ and $\mathcal{Q}$ be lattice polytopes.
\begin{enumerate}
    \item[\textup{(i)}] If $\mathcal{P}$ and $\mathcal{Q}$ are spanning, then $\mathcal{P} \times \mathcal{Q}$ is spanning.
    \item[\textup{(ii)}] If $\mathcal{P}$ and $\mathcal{Q}$ are very ample, then $\mathcal{P} \times \mathcal{Q}$ is very ample.
    \item[\textup{(iii)}] If $\mathcal{P}$ and $\mathcal{Q}$ are IDP, then $\mathcal{P} \times \mathcal{Q}$ is IDP.
    \item[\textup{(iv)}] If $\mathcal{P}$ and $\mathcal{Q}$ have unimodular triangulations, then $\mathcal{P} \times \mathcal{Q}$ has a unimodular triangulation.
    \item[\textup{(v)}] If $\mathcal{P}$ and $\mathcal{Q}$ have regular unimodular triangulations, then $\mathcal{P} \times \mathcal{Q}$ has a regular unimodular triangulation.
    \item[\textup{(vi)}] If $\mathcal{P}$ and $\mathcal{Q}$ have quadratic triangulations, then $\mathcal{P} \times \mathcal{Q}$ has a quadratic triangulation.
\end{enumerate}
\end{prop}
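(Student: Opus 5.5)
The statement is Proposition~\ref{Proposition-Spanning-IDP-Very}, and I would prove each item directly, using the product structure of the lattice $\mathbb{Z}^{p+q}=\mathbb{Z}^p\times\mathbb{Z}^q$ and of the dilates $k(\mathcal{P}\times\mathcal{Q})=k\mathcal{P}\times k\mathcal{Q}$.

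\textbf{Item (i).} Suppose $a=\sum\lambda_i x_i$ and $b=\sum\mu_j y_j$ are integer affine combinations, with $x_i\in\mathcal{P}\cap\mathbb{Z}^p$ and $y_j\in\mathcal{Q}\cap\mathbb{Z}^q$. Then
\[
(a,b)=\sum_{i,j}\lambda_i\mu_j\,(x_i,y_j),
\]
and the coefficients sum to $1$. This is an integer affine combination of lattice points of $\mathcal{P}\times\mathcal{Q}$, which gives (i).

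\textbf{Item (iii).} Take $(z,w)\in k\mathcal{P}\times k\mathcal{Q}$. Since $\mathcal{P}$ and $\mathcal{Q}$ are IDP, we can write $z=\sum_{i=1}^k z_i$ and $w=\sum_{i=1}^k w_i$ with $z_i\in\mathcal{P}\cap\mathbb{Z}^p$ and $w_i\in\mathcal{Q}\cap\mathbb{Z}^q$. Then $(z,w)=\sum_{i=1}^k(z_i,w_i)$, with each summand a lattice point of $\mathcal{P}\times\mathcal{Q}$. Item (ii) follows by the same pairing argument. Take $k\ge\max(k_{\mathcal{P}},k_{\mathcal{Q}})$, where the thresholds come from very ampleness. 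Both $z$ and $w$ then decompose into exactly $k$ summands, and we pair the summands as before.

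\textbf{Items (iv)--(vi).} Here I would use the staircase refinement of a product of simplices. Let $\mathcal{T}_{\mathcal{P}}$ and $\mathcal{T}_{\mathcal{Q}}$ be unimodular triangulations. The cells $\Delta\times\Delta'$ with $\Delta\in\mathcal{T}_{\mathcal{P}}$ and $\Delta'\in\mathcal{T}_{\mathcal{Q}}$ form a polyhedral subdivision of $\mathcal{P}\times\mathcal{Q}$. Each cell is a product of unimodular simplices, so up to a unimodular map it is $\Delta_p\times\Delta_q$ with $\Delta_p,\Delta_q$ standard simplices. Its staircase triangulation has all maximal simplices unimodular, because $\Delta_p\times\Delta_q$ is totally unimodular.

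To get a global triangulation, fix total orders on the vertex sets of $\mathcal{T}_{\mathcal{P}}$ and $\mathcal{T}_{\mathcal{Q}}$, and apply the staircase construction in every cell using the induced orders. The restriction of a staircase triangulation to a face $\sigma\times\tau$ is again the staircase triangulation for the induced orders. Hence the cell triangulations agree on common faces and glue, which gives (iv).

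For (v), note that the staircase triangulation is regular, induced by the lifting $(i,j)\mapsto -\,ij$ in the chosen orders. Take a regular lifting $\omega_{\mathcal{P}}+\omega_{\mathcal{Q}}$ for the product subdivision, and add $\varepsilon$ times the staircase lifting. For small $\varepsilon>0$ this realizes the refinement, so the triangulation is regular.

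For (vi), staircase triangulations are flag. Moreover, when $\mathcal{T}_{\mathcal{P}}$ and $\mathcal{T}_{\mathcal{Q}}$ are flag, a set of vertices $(v_i,w_i)$ forms a face exactly when three conditions hold: the $v_i$ span a face of $\mathcal{T}_{\mathcal{P}}$, the $w_i$ span a face of $\mathcal{T}_{\mathcal{Q}}$, and the pairs form a chain in the product order. All three conditions are pairwise, so the resulting complex is flag.

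\textbf{Main obstacle.} The delicate step is (vi), and to a lesser extent the compatibility argument in (iv). The vertex orders must be chosen so that the orders induced on different simplices are consistent. It must also be checked that the face description is genuinely pairwise, since flagness requires every minimal non-face to have two elements. I would handle this by taking global total orders on $\operatorname{vert}(\mathcal{T}_{\mathcal{P}})$ and $\operatorname{vert}(\mathcal{T}_{\mathcal{Q}})$ from the start.
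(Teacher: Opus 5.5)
Your proposal is correct. The paper gives no proof of this proposition; it imports it from \cite[Proposition 2.2]{Ferroni23} only as a benchmark for the join results, so there is no internal argument to compare against. Your route is the standard one (cf.\ \cite{DeLoera2010}): coordinatewise pairing of decompositions for (i)--(iii), and for (iv)--(vi) the staircase refinement of the product subdivision $\{\Delta\times\Delta'\}$ with respect to global vertex orders.

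In (v) you assert three standard facts without proof, and they are worth spelling out:
\begin{enumerate}
\item[(a)] The lifting $\omega(v,w)=\omega_{\mathcal{P}}(v)+\omega_{\mathcal{Q}}(w)$ induces $\{\Delta\times\Delta'\}$. The lower supporting functional for $\Delta\times\Delta'$ is $h_{\mathcal{P}}(x)+h_{\mathcal{Q}}(y)$, the product analogue of the functional $H$ in the proof of Theorem~\ref{Theorem-Regular-Uniom-Triangu}.
\item[(b)] On a cell the global ranks are no longer consecutive, so you should check that the lifting $-ij$ still satisfies $h(i,j)+h(i',j')<h(i,j')+h(i',j)$ for $i<i'$ and $j<j'$. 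It does, and this is exactly what forces the chain diagonal on every square face, hence the staircase triangulation on the cell.
\item[(c)] The perturbation lemma: $\omega+\varepsilon h$ realizes the cellwise refinement for small $\varepsilon>0$.
\end{enumerate}
In (vi), your face description holds for arbitrary $\mathcal{T}_{\mathcal{P}},\mathcal{T}_{\mathcal{Q}}$. Flagness is used only to make the first two conditions pairwise.

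It is instructive to compare this with the paper's join proofs. For the join, the cells $\Delta_{\mathcal{P}}*\Delta_{\mathcal{Q}}$ are already simplices (Lemma~\ref{Lemma-triangulation-join}). So no refinement, vertex orders or perturbation are needed, and unimodularity reduces to a block-diagonal determinant. For the product, the cells $\Delta\times\Delta'$ are not simplices. Your global-order staircase construction is precisely the extra ingredient that makes the refined pieces compatible, regular and flag at the same time.

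In the other direction, your proof of (ii) relies on decomposing both factors at the same level $k\geq\max(k_{\mathcal{P}},k_{\mathcal{Q}})$. In the join, the last coordinate pins the $\mathcal{P}$-component to a fixed level $c$, which may lie below the very-ampleness threshold. This is exactly why very ampleness fails for joins (Theorem~\ref{Theorem-Veryample-Join}).
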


\section{$h^*$-palindromic and $h^*$-unimodal}\label{Section-Four-unimod}

Let $f(x)=a_mx^m+a_{m-1}x^{m-1}+\cdots+a_1x+a_0$ be a polynomial with nonnegative real coefficients.
Such a polynomial is \emph{unimodal} if $a_0\leq \cdots \leq a_{i-1}\leq a_i\geq a_{i+1} \geq \cdots \geq a_m$ for some $0\leq i\leq m$;
\emph{log-concave} if $a_i^2\geq a_{i-1}a_{i+1}$ for $1\leq i\leq m-1$;
and \emph{real-rooted} if all its roots are real.
Real-rooted polynomials are log-concave, and log-concave polynomials with all positive coefficients are unimodal.
See \cite[Section 5]{RP.StanleyAC} or \cite{StanleyLog-concave}.

A $d$-dimensional polytope $\mathcal{P}$ is \emph{$h^*$-unimodal} if the coefficients of its $h^*$-polynomial satisfy $h_0\leq \cdots \leq h_{j-1}\leq h_j \geq h_{j+1}\geq \cdots \geq h_{d}$ for some $0\leq j\leq d$.
For background on $h^*$-unimodality, see \cite{Braum16}.

The polytope $\mathcal{P}$ is said to be \emph{Gorenstein of index $r$} \cite[Page 98]{BeckRobins} (for some positive integer $r$) if it satisfies the following lattice point conditions: $(r-1)\mathcal{P}^\circ \cap \mathbb{Z}^d = \emptyset$, $\left| r\mathcal{P}^\circ \cap \mathbb{Z}^d \right| = 1$, and $t\mathcal{P}^\circ \cap \mathbb{Z}^d = (t-r)\mathcal{P} \cap \mathbb{Z}^d$ for all integers $t > r$. For further background on Gorenstein polytopes, see \cite{Bruns-hvector}.
By \cite[Theorem 4.5]{BeckRobins}, a lattice polytope $\mathcal{P}$ is Gorenstein of index $r$ if and only if its $h^*$-polynomial is \emph{palindromic} of degree $p + 1 - r$, that is, it satisfies
$$ x^{p+1-r} h^*_{\mathcal{P}}(1/x) = h^*_{\mathcal{P}}(x). $$

\subsection{$h^*$-palindromic for the Cartesian product of polytopes}

Let $\mathcal{P}$ and $\mathcal{Q}$ be lattice polytopes. We highlight the following properties concerning the Cartesian product $\mathcal{P} \times \mathcal{Q}$:
\begin{enumerate}
    \item[\textup{(i)}] If $h^*_{\mathcal{P}}$ and $h^*_{\mathcal{Q}}$ are real-rooted polynomials, then $h^*_{\mathcal{P}\times \mathcal{Q}}$ is real rooted too. See \cite{Wagner1992} or \cite{Ferroni23}.
    \item[\textup{(ii)}] If $h^*_{\mathcal{P}}$ and $h^*_{\mathcal{Q}}$ are log-concave, is necessarily  $h^*_{\mathcal{P}\times \mathcal{Q}}$ log-concave too? This is an open question, see \cite{Ferroni23}.
    \item[\textup{(iii)}] If $h^*_{\mathcal{P}}$ and $h^*_{\mathcal{Q}}$ are unimodal, then $h^*_{\mathcal{P}\times \mathcal{Q}}$ is not necessarily unimodal. See \cite{Balletti}.
    \item[\textup{(iv)}] If $h^*_{\mathcal{P}}$ and $h^*_{\mathcal{Q}}$ are symmetric, then $h^*_{\mathcal{P}\times \mathcal{Q}}$ is not necessarily symmetric. See Example \ref{Exa-Gorenstein-Time}.
\end{enumerate}

\begin{exa}\label{Exa-Gorenstein-Time}
Let $\mathcal{P} = [0, 1]$ and $\mathcal{Q} = [0, 2]$ be $1$-polytopes in $\mathbb{R}$.
Both $\mathcal{P}$ and $\mathcal{Q}$ are Gorenstein. The polytope $\mathcal{P}$ has index $2$ with $h^*$-polynomial $h^*_{\mathcal{P}}(x) = 1$, while $\mathcal{Q}$ has index $1$ with $h^*_{\mathcal{Q}}(x) = 1 + x$.
Then $h^*_{\mathcal{P} \times \mathcal{Q}}(x) = 1 + 3x$, which is not symmetric.
\end{exa}

However, the Cartesian product of two Gorenstein polytopes of the same index remains Gorenstein.
We prove this using the following lemma, whose proof is immediate from the definition of a Gorenstein polytope.

\begin{lem}\label{Lem-Gorenstin-index-Ehrht}
Let $\mathcal{P} \subset \mathbb{R}^p$ be a lattice polytope, and let $\mathcal{P}^\circ$ denote its relative interior. Then $\mathcal{P}$ is a Gorenstein polytope with index $r$ if and only if
$$i(\mathcal{P}^\circ, t) = i(\mathcal{P}, t - r)\quad \text{for all}\quad t \in \mathbb{N},$$
where we take the convention $i(\mathcal{P}, m)=0$ if $m<0$; and $i(\mathcal{P}, m)=1$ if $m=0$.
\end{lem}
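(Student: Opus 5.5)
Both directions unwind the definition of Gorenstein of index $r$: $(r-1)\mathcal{P}^\circ\cap\mathbb{Z}^p=\emptyset$, $|r\mathcal{P}^\circ\cap\mathbb{Z}^p|=1$, and $t\mathcal{P}^\circ\cap\mathbb{Z}^p=(t-r)\mathcal{P}\cap\mathbb{Z}^p$ for $t>r$. The last condition is an equality of sets, so the task is to pass between it and the numerical identity $i(\mathcal{P}^\circ,t)=i(\mathcal{P},t-r)$. We work throughout in the affine span of $\mathcal{P}$; if $\mathcal{P}$ is not full-dimensional, $(t-r)\mathcal{P}$ should be read as the translate by the unique interior lattice point $w\in r\mathcal{P}^\circ$. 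This is the convention of \cite{BeckRobins}, and the proof uses it in that form.

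\emph{Forward direction.} Assume $\mathcal{P}$ is Gorenstein of index $r$. Reading off the three conditions gives:
\begin{itemize}
\item for $t\le r-1$, $i(\mathcal{P}^\circ,t)=0$, since $t\mathcal{P}^\circ\cap\mathbb{Z}^p=\emptyset$ (this needs monotonicity, handled below);
\item for $t=r$, $i(\mathcal{P}^\circ,r)=1=i(\mathcal{P},0)$;
\item for $t>r$, $i(\mathcal{P}^\circ,t)=i(\mathcal{P},t-r)$, because the two lattice point sets agree, up to the translation by $w$, which is a lattice bijection.
\end{itemize}
The $t=0$ case is trivially consistent with the convention $i(\mathcal{P},m)=0$ for $m<0$.

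The only nontrivial point is monotonicity: $(r-1)\mathcal{P}^\circ\cap\mathbb{Z}^p=\emptyset$ must force $t\mathcal{P}^\circ\cap\mathbb{Z}^p=\emptyset$ for every $1\le t<r-1$. I would prove the contrapositive. Suppose $u\in t\mathcal{P}^\circ$ is a lattice point and $v$ is a lattice point of $(r-1-t)\mathcal{P}$, for instance a multiple of a vertex. Then $u+v$ is a lattice point of $(r-1)\mathcal{P}^\circ$, since the relative interior plus the polytope stays in the relative interior. So nonemptiness at level $t$ propagates upward to level $r-1$, and emptiness at $r-1$ propagates downward.

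\emph{Converse.} Assume $i(\mathcal{P}^\circ,t)=i(\mathcal{P},t-r)$ for all $t$. Then $i(\mathcal{P}^\circ,t)=0$ for $t<r$ and $i(\mathcal{P}^\circ,r)=1$; let $w$ be the unique interior lattice point of $r\mathcal{P}^\circ$. For $t>r$ we have the inclusion $w+(t-r)\mathcal{P}\subseteq t\mathcal{P}^\circ$, for the same reason as before (interior point plus polytope is interior). Both sides contain the same number of lattice points by hypothesis, so the lattice point sets are equal, which is the third Gorenstein condition.

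The main obstacle is the inclusion-plus-cardinality step for $t>r$ in the converse, together with the monotonicity argument in the forward direction; the remaining steps just read off the definitions.
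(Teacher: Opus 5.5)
Your argument is correct and follows the paper's route: the paper only asserts that the lemma is immediate from the definition. You supply the two details it omits, namely the downward propagation of $t\mathcal{P}^\circ\cap\mathbb{Z}^p=\emptyset$ from $t=r-1$ via $t\mathcal{P}^\circ+s\mathcal{P}\subseteq(t+s)\mathcal{P}^\circ$, and, for $t>r$, the inclusion $w+(t-r)\mathcal{P}\subseteq t\mathcal{P}^\circ$ combined with equal cardinalities. One small correction to your framing: the translation by $w$ is needed even when $\mathcal{P}$ is full-dimensional, not only in the lower-dimensional case (for $\mathcal{P}=[0,1]$, $r=2$, $w=1$, the literal equality $t\mathcal{P}^\circ\cap\mathbb{Z}=(t-2)\mathcal{P}\cap\mathbb{Z}$ fails), but since your proof uses the translated reading throughout, nothing breaks.
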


\begin{thm}\label{Theorem-Gorenstein-Cartesion}
Let $\mathcal{P} \subset \mathbb{R}^p$ and $\mathcal{Q} \subset \mathbb{R}^q$ be Gorenstein lattice polytopes of dimensions $p, q \ge 1$, with indices $r_{\mathcal{P}}$ and $r_{\mathcal{Q}}$, respectively. The Cartesian product $\mathcal{P} \times \mathcal{Q}$ is a Gorenstein polytope (i.e., $h^*_{\mathcal{P} \times \mathcal{Q}}(x)$ is palindromic) if and only if $r_{\mathcal{P}} = r_{\mathcal{Q}}$.
\end{thm}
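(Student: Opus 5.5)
The plan is to use Lemma~\ref{Lem-Gorenstin-index-Ehrht} together with the product formulas \eqref{Equation-product-Hibi} and \eqref{Equation-product-circ}, working throughout with Ehrhart polynomials rather than $h^*$-polynomials.

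\emph{Sufficiency.} Suppose $r_{\mathcal{P}}=r_{\mathcal{Q}}=r$. For every $t\in\mathbb{N}$ we then have
\begin{align*}
i((\mathcal{P}\times\mathcal{Q})^\circ,t)=i(\mathcal{P}^\circ,t)\, i(\mathcal{Q}^\circ,t)=i(\mathcal{P},t-r)\, i(\mathcal{Q},t-r)=i(\mathcal{P}\times\mathcal{Q},t-r).
\end{align*}
The convention for $m<0$ and $m=0$ is respected, since $0\cdot0=0$ and $1\cdot1=1$. Lemma~\ref{Lem-Gorenstin-index-Ehrht} then shows that $\mathcal{P}\times\mathcal{Q}$ is Gorenstein of index $r$.

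\emph{Necessity.} Suppose $\mathcal{P}\times\mathcal{Q}$ is Gorenstein, of some index $s$, and assume without loss of generality that $r_{\mathcal{P}}<r_{\mathcal{Q}}$. Since $(r-1)\mathcal{P}^\circ$ contains no lattice point, the index of a Gorenstein polytope equals the smallest $t\ge1$ with $i(\mathcal{P}^\circ,t)>0$. Because $i((\mathcal{P}\times\mathcal{Q})^\circ,t)=i(\mathcal{P}^\circ,t)\,i(\mathcal{Q}^\circ,t)$, and $i(\mathcal{P}^\circ,t)>0$ for all $t\ge r_{\mathcal{P}}$, the product is positive exactly when $t\ge \max(r_{\mathcal{P}},r_{\mathcal{Q}})$. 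Hence $s=r_{\mathcal{Q}}$.

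Now compare the two sides at $t=r_{\mathcal{Q}}+1$. Lemma~\ref{Lem-Gorenstin-index-Ehrht} applied to $\mathcal{P}\times\mathcal{Q}$ gives
\begin{align*}
i(\mathcal{P}\times\mathcal{Q},1)=i(\mathcal{P},1)\, i(\mathcal{Q},1).
\end{align*}
Applying the same lemma to each factor, the left side $i(\mathcal{P}^\circ,t)\,i(\mathcal{Q}^\circ,t)$ equals
\begin{align*}
i(\mathcal{P},r_{\mathcal{Q}}+1-r_{\mathcal{P}})\, i(\mathcal{Q},1).
\end{align*}
Since $i(\mathcal{Q},1)>0$, we conclude $i(\mathcal{P},k)=i(\mathcal{P},1)$ with $k=r_{\mathcal{Q}}+1-r_{\mathcal{P}}\ge2$. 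This contradicts the strict monotonicity of $i(\mathcal{P},t)$ for $\dim\mathcal{P}=p\ge1$: the polynomial has positive leading coefficient, and more concretely $k\mathcal{P}$ contains strictly more lattice points than $\mathcal{P}$, because a translate of $\mathcal{P}$ by a lattice vertex sits inside $k\mathcal{P}$ and $k\mathcal{P}$ contains additional lattice points. So $r_{\mathcal{P}}=r_{\mathcal{Q}}$.

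The main obstacle is justifying $i(\mathcal{P},k)>i(\mathcal{P},1)$ for $k\ge2$ cleanly. The intended argument is as follows. Translate so that a vertex $v$ of $\mathcal{P}$ is at the origin. Then $\mathcal{P}\subseteq k\mathcal{P}$, since $x=\tfrac1k(kx)+\bigl(1-\tfrac1k\bigr)\mathbf{0}$. Now take another vertex $w\neq v$, which exists because $p\ge1$. The point $kw$ lies in $k\mathcal{P}$ but not in $\mathcal{P}$, because $w$ is a vertex and $\mathcal{P}$ lies on one side of a hyperplane through $w$ that meets it only at $w$. Alternatively, the monotonicity is immediate from $h^*$-nonnegativity, since $i(\mathcal{P},t)=\sum_j h^*_j\binom{t+p-j}{p}$ with $h^*_0=1$. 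A secondary check is that, as I proposed, the index of a Gorenstein polytope is determined by the first positive interior count; this follows directly from the definition.
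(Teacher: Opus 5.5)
Your proof is correct. The sufficiency direction is the same as the paper's, but you prove necessity by a different route.

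The paper compares the coefficients of $t^{p+q-1}$ in the polynomial identity $i(\mathcal{P},t-r_{\mathcal{P}})\,i(\mathcal{Q},t-r_{\mathcal{Q}})=i(\mathcal{P},t-r)\,i(\mathcal{Q},t-r)$. This gives the linear relation $p\,r_{\mathcal{P}}+q\,r_{\mathcal{Q}}=(p+q)\,r$, into which it substitutes $r=\max(r_{\mathcal{P}},r_{\mathcal{Q}})$.

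You share the step $r=\max(r_{\mathcal{P}},r_{\mathcal{Q}})$. Instead of extracting a subleading coefficient, you evaluate at the single point $t=r_{\mathcal{Q}}+1$ and use strict monotonicity of $t\mapsto i(\mathcal{P},t)$. Your justification of monotonicity is sound: if $kw\in\mathcal{P}$, then $w$ would be a proper convex combination of $0$ and $kw$, and the $h^*$-nonnegativity argument also works.

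What each route buys:
\begin{itemize}
\item The paper's argument is purely algebraic. It needs only $a_p,b_q\neq 0$ and produces an explicit relation among $r_{\mathcal{P}},r_{\mathcal{Q}},r$.
\item Yours trades that coefficient bookkeeping for an elementary lattice-point count.
\item Both use $p\ge 1$ at the same place: the paper divides by $p$, and you need $\mathcal{P}$ to have a second vertex.
\end{itemize}

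Your idea can be pushed one step further. Evaluating at $t=r_{\mathcal{Q}}$ itself gives $1=i((\mathcal{P}\times\mathcal{Q})^\circ,r_{\mathcal{Q}})=i(\mathcal{P},r_{\mathcal{Q}}-r_{\mathcal{P}})\cdot i(\mathcal{Q},0)$. The right side is at least $2$, because $(r_{\mathcal{Q}}-r_{\mathcal{P}})\mathcal{P}$ contains the distinct lattice points $(r_{\mathcal{Q}}-r_{\mathcal{P}})v$ for the at least two vertices $v$ of $\mathcal{P}$. So monotonicity is not needed at all.

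One presentational fix: write out the chain $i((\mathcal{P}\times\mathcal{Q})^\circ,r_{\mathcal{Q}}+1)=i(\mathcal{P}\times\mathcal{Q},1)=i(\mathcal{P},1)\,i(\mathcal{Q},1)$ explicitly. As written, your display attributes the product formula to the Gorenstein lemma.
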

\begin{proof}
We first prove the sufficiency.
Suppose that $r_{\mathcal{P}} = r_{\mathcal{Q}} = r$. Since both $\mathcal{P}$ and $\mathcal{Q}$ are Gorenstein, Lemma~\ref{Lem-Gorenstin-index-Ehrht} implies that $i(\mathcal{P}^\circ, t) = i(\mathcal{P}, t - r)$ and $i(\mathcal{Q}^\circ, t) = i(\mathcal{Q}, t - r)$.

For $t \ge r$, by Equations~\eqref{Equation-product-Hibi} and \eqref{Equation-product-circ}, we have
$$i((\mathcal{P} \times \mathcal{Q})^\circ, t)=i(\mathcal{P}^\circ, t)\cdot i(\mathcal{Q}^\circ, t) = i(\mathcal{P}, t - r)\cdot i(\mathcal{Q}, t - r) = i(\mathcal{P} \times \mathcal{Q}, t - r).$$

For $t < r$, the definition of the Cartesian product and the identity $(\mathcal{P} \times \mathcal{Q})^\circ = \mathcal{P}^\circ \times \mathcal{Q}^\circ$ imply that $t(\mathcal{P} \times \mathcal{Q})^\circ \cap \mathbb{Z}^{p+q+1} = \emptyset$. Thus, we obtain
$$i((\mathcal{P} \times \mathcal{Q})^\circ, t)=0 = i(\mathcal{P} \times \mathcal{Q}, t - r).$$
By Lemma~\ref{Lem-Gorenstin-index-Ehrht},  $\mathcal{P} \times \mathcal{Q}$ is Gorenstein with index $r$, and then $h^*_{\mathcal{P} \times \mathcal{Q}}(x)$ is palindromic.

Next, we prove the necessity. Assume $\mathcal{P} \times \mathcal{Q}$ is a Gorenstein polytope of index $r$.
Express the respective Ehrhart polynomials as
\begin{align*}
i(\mathcal{P}, t) &= a_p t^p + a_{p-1}t^{p-1} + \cdots+1, \quad a_p > 0;
\\ i(\mathcal{Q}, t) &= b_q t^q + b_{q-1}t^{q-1} + \cdots+1, \quad b_q > 0.
\end{align*}
By Lemma~\ref{Lem-Gorenstin-index-Ehrht}, we have
$i((\mathcal{P} \times \mathcal{Q})^\circ, t) = i(\mathcal{P} \times \mathcal{Q}, t - r).$
It follows that
\begin{align}\label{Equation-P-Q-index}
i(\mathcal{P}, t - r_{\mathcal{P}})\cdot i(\mathcal{Q}, t - r_{\mathcal{Q}}) = i(\mathcal{P}, t - r)\cdot i(\mathcal{Q}, t - r).
\end{align}
Let $\mathrm{LHS}$ and $\mathrm{RHS}$ denote the coefficients of $t^{p+q-1}$ on the left-hand and right-hand sides, respectively, of Equation \eqref{Equation-P-Q-index}.
We obtain
\begin{align*}
\mathrm{LHS} &= a_p b_{q-1} + b_q a_{p-1} - a_p b_q (p r_{\mathcal{P}} + q r_{\mathcal{Q}}),
\\ \mathrm{RHS} &= a_p b_{q-1} + b_q a_{p-1} - a_p b_q (p r + q r).
\end{align*}
Equating $\mathrm{LHS} = \mathrm{RHS}$ and dividing by $-a_p b_q \neq 0$ yields
$$p r_{\mathcal{P}} + q r_{\mathcal{Q}} = (p + q) r.$$

Geometrically, the index $r$ of a Gorenstein polytope $\mathcal{K}$ is the minimal positive integer dilation such that its relative interior contains a lattice point: $r = \min \{ t \in \mathbb{Z}_{>0} \mid i(\mathcal{K}^\circ, t) > 0 \}$. Since $(\mathcal{P} \times \mathcal{Q})^\circ$ contains a lattice point if and only if both $\mathcal{P}^\circ$ and $\mathcal{Q}^\circ$ contain lattice points simultaneously, we deduce that $r = \max(r_{\mathcal{P}}, r_{\mathcal{Q}})$.
Without loss of generality, assume $r_{\mathcal{P}} \le r_{\mathcal{Q}}$, which implies $r = r_{\mathcal{Q}}$. Substituting this into the derived linear relation
$$p r_{\mathcal{P}} + q r_{\mathcal{Q}} = p r_{\mathcal{Q}} + q r_{\mathcal{Q}} \implies p r_{\mathcal{P}} = p r_{\mathcal{Q}}.$$
It implies that $r_{\mathcal{P}} = r_{\mathcal{Q}}$. This completes the proof.
\end{proof}

\subsection{$h^*$-unimodal for the join of polytopes}

Let $\mathcal{P}\subseteq\mathbb{R}^p$ and $\mathcal{Q}\subseteq\mathbb{R}^q$ be lattice polytopes of dimensions $p$ and $q$, respectively.
By Proposition~\ref{Prop-Join-Properties}, the $h^*$-polynomial of $\mathcal{P}* \mathcal{Q}$ is given by
\begin{align*}
h^*_{\mathcal{P}*\mathcal{Q}}(x)= h^*_{\mathcal{P}}(x)\cdot  h^*_{\mathcal{Q}}(x).
\end{align*}

Clearly, the product of two real-rooted polynomials is again real-rooted.
By a simple derivation, one can obtain the product of two palindromic polynomials is again palindromic.
The product of two log-concave polynomials with nonnegative coefficients and no internal zeros is still log-concave, see \cite{Keilson-log}.
However, the product of two unimodal polynomials is not necessarily unimodal, see \cite[Proposition 3]{StanleyLog-concave}.

\begin{thm}\label{Theorem-Unimod-join}
If $h^*_{\mathcal{P}}$ and $h^*_{\mathcal{Q}}$ are unimodal, then $h^*_{\mathcal{P} *\mathcal{Q}}$ is not necessarily unimodal.
\end{thm}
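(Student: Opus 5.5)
The plan is to reduce the statement to a purely polynomial question and then realize a counterexample geometrically. By Proposition~\ref{Prop-Join-Properties}(iv), $h^*_{\mathcal{P}*\mathcal{Q}}(x)=h^*_{\mathcal{P}}(x)\,h^*_{\mathcal{Q}}(x)$. It therefore suffices to exhibit two lattice polytopes $\mathcal{P}$ and $\mathcal{Q}$ whose $h^*$-polynomials $f$ and $g$ are unimodal but whose product $fg$ is not. Since the statement is a non-implication, one explicit pair suffices, and the proof should end with a displayed computation of $f$, $g$ and $fg$.

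\textbf{Step 1 (choose the polynomials).} I want unimodal $f,g$ with nonnegative integer coefficients, constant term $1$, and $fg$ not unimodal. The mechanism I would use is to place the peaks of $f$ and $g$ at very different positions, both preceded and followed by flat stretches of small coefficients. The product then acquires two separated large bumps, roughly one at the sum of the early peak positions and one at the sum of the late ones, with a trough between them. The template is Stanley's classical example, cited above as \cite[Proposition 3]{StanleyLog-concave}; I would adapt it so that both constant terms equal $1$.

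Some easy candidates fail. If one factor is $1+Nx$, then $c_i=g_i+Ng_{i-1}$, and for short unimodal $g$ this appears to stay unimodal. Hence one factor needs a long low plateau. My first attempt would be
\[
f=1+Nx+x^2+\cdots+x^k,\qquad g=1+x+\cdots+x^{k-1}+Mx^k,
\]
with $N,M$ large and $k$ moderate. I would then compare the coefficient of $fg$ near degree $k+1$, which is of order $NM$, with neighboring coefficients just below it. If the trough is not deep enough, I would run a small computer search over degrees at most $4$ or $5$ and coefficients at most about $10$, and record the simplest pair found.

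\textbf{Step 2 (realize them as $h^*$-polynomials).} Segments give only the trivial cases $1+(m-1)x$, so I would use lattice simplices. For a simplex $\Delta$ with vertices $v_0,\dots,v_d$, the coefficient $h^*_i$ counts the lattice points of height $i$ in the half-open fundamental parallelepiped of the cone over $\Delta$. Take vertices $\mathbf{0},e_1,\dots,e_{d-1}$ and $(a_1,\dots,a_{d-1},m)$. The parallelepiped points are then indexed by $j\in\mathbb{Z}/m$, and the height of the $j$-th point is computed from the fractional parts $\{ja_i/m\}$ together with $j/m$. By tuning $m$ and the $a_i$, one obtains prescribed small-degree $h^*$-polynomials such as $1+Nx+x^2+\cdots$. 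I would verify the two chosen polynomials directly through this height count. Alternatively, I would exploit the fact that the join of simplices is a simplex (Proposition~\ref{Prop-Join-Properties}(iii)) and multiplies $h^*$-polynomials, building factors from simpler pieces.

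\textbf{Main obstacle.} The difficulty is the interaction of the two steps. The polynomials must be non-unimodal after multiplication, yet each must still be a genuine $h^*$-polynomial. Realizability imposes constraints such as Hibi's and Stanley's inequalities (for instance $h^*_0+\cdots+h^*_{i}\le h^*_d+\cdots+h^*_{d-i}$ when $h^*_d\neq 0$ is not required, but Hibi's $h^*_1\ge h^*_d$ is). These constraints may exclude the most naive candidates, for example $g$ with a huge top coefficient $M$ and $h^*_1=1$. I would therefore pick the degree $d$ of $\Delta$ larger than $\deg g$, so that the top coefficient is unconstrained by $h^*_1$. If necessary, I would instead place the large coefficient in $f$ at an early degree and the late peak in a polytope of higher dimension, and confirm the final pair explicitly by listing the parallelepiped points.
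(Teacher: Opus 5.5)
Your reduction to $h^*_{\mathcal P*\mathcal Q}=h^*_{\mathcal P}\,h^*_{\mathcal Q}$ via Proposition~\ref{Prop-Join-Properties}(iv) is correct. But the statement is a non-implication, so the explicit counterexample \emph{is} the proof, and your proposal never produces one. Worse, the one concrete family you propose provably fails. For $f=1+Nx+x^2+\cdots+x^k$ and $g=1+x+\cdots+x^{k-1}+Mx^k$ with $k\ge 2$, the coefficients of $fg$ are
\[
1,\;N+1,\;\dots,\;N+k-1,\;N+M+k-1,\;NM+k-1,\;M+k-2,\;\dots,\;M,
\]
which is unimodal for all $N,M\ge 1$. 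Separating single peaks is the wrong mechanism, because the product of the two peaks simply dominates.

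The missing idea is that \emph{both} factors must fail log-concavity. A log-concave sequence without internal zeros, convolved with any unimodal sequence, stays unimodal (Ibragimov's strong unimodality, in the discrete form of Keilson and Gerber). This is also why your $1+Nx$ trials could never succeed. The simplest way to exploit a failure of log-concavity is to take $\mathcal Q=\mathcal P$ with $h^*$ flat and then jumping. If $s=\deg f$, $f_{s-2}=f_{s-1}=a>0$ and $f_s=b>2a$, then the coefficients of $f^2$ in degrees $2s-2,\,2s-1,\,2s$ are $2ab+a^2>2ab<b^2$. This is exactly the paper's proof. It takes the Ferroni--Higashitani $5$-polytope with $h^*_{\mathcal P}=1+6x+6x^2+113x^3$ (unimodal, not log-concave), and $h^*_{\mathcal P*\mathcal P}=1+12x+48x^2+298x^3+1392x^4+1356x^5+12769x^6$ dips at degree $5$.

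Your Step~2 is also left open, and it is not a formality, since natural outputs of a small search need not be $h^*$-polynomials. For example, the square of $1+x+3x^2$ is $1+2x+7x^2+6x^3+9x^4$, which is not unimodal. Yet $1+x+3x^2$ is not the $h^*$-polynomial of any lattice simplex. The parallelepiped group would be $\mathbb Z/5$, so every nonzero element has the same support size $n$ and $\mathrm{ht}(j)+\mathrm{ht}(-j)=n$. That is impossible with heights $\{1,2,2,2\}$.

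Your route can nevertheless be completed with the cyclic simplices you describe. Take the $5$-simplex $\Delta=\operatorname{conv}(\mathbf{0},e_1,e_2,e_3,e_4,(2,2,2,4,6))$. Its parallelepiped group is generated by the barycentric vector $\tfrac16(3,4,4,4,2,1)$. The parallelepiped points for $j=0,\dots,5$ have heights $0,3,2,1,3,3$. Hence $h^*_\Delta=1+x+x^2+3x^3$, which is unimodal, while $h^*_{\Delta*\Delta}=1+2x+3x^2+8x^3+7x^4+6x^5+9x^6$ is not. With such an explicitly verified example, your argument would be a self-contained alternative to the paper's, which instead imports a ready-made non-log-concave polytope from the literature.
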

\begin{proof}
Ferroni and Higashitani \cite[Theorem 5.5]{Ferroni23} constructed a polytope whose $h^*$-polynomial is unimodal but not log-concave.
This is a $5$-dimensional polytope $\mathcal{P}$ in $\mathbb{R}^7$ whose vertices are given by the columns of the following matrix:
\begin{align*}
\left(
\begin{array}{cccccccccccc}
1 & 1 & 1 & 1 & 0 & 0 & 0 & 0 & 0 & 0 & 0 & 0 \\
1 & 1 & 0 & 0 & 1 & 1 & 0 & 0 & 0 & 0 & 0 & 0 \\
0 & 0 & 0 & 0 & 1 & 1 & 1 & 1 & 0 & 0 & 0 & 0 \\
0 & 0 & 0 & 0 & 0 & 0 & 1 & 1 & 1 & 1 & 0 & 0 \\
0 & 0 & 0 & 0 & 0 & 0 & 0 & 0 & 1 & 1 & 1 & 1 \\
0 & 0 & 1 & 1 & 0 & 0 & 0 & 0 & 0 & 0 & 1 & 1 \\
111 & 112 & 0 & 1 & 0 & 1 & 0 & 1 & 0 & 1 & 0 & 1
\end{array}\right)
\end{align*}
The $h^*$-polynomial is $h^*_\mathcal{P}(x) = 1 + 6x + 6x^2 + 113x^3$, which is unimodal (but not log-concave).
The $h^*$-polynomial of $\mathcal{P}*\mathcal{P}$ is given by
$$h^*_{\mathcal{P} * \mathcal{P}}(x) = 1 + 12x + 48x^2 + 298x^3 + 1392x^4 + 1356x^5 + 12769x^6.$$
Obviously, this polynomial is not unimodal.
\end{proof}

\section{IDP, very ample, and spanning}\label{Section-Five-idp}

This section mainly studies the integer decomposition property, very ample property, and spanning property of the join of two lattice polytopes. Our main motivation for studying these objects comes from a result concerning the Cartesian product of two lattice polytopes, namely Proposition \ref{Proposition-Spanning-IDP-Very}.

\subsection{Integer decomposition property}

Before investigating the IDP property of the join, we present a basic lemma that will be employed frequently in subsequent arguments.

\begin{lem}\label{Lemma-IDP-two-convex-comb}
Let $\mathcal{P} \subset \mathbb{R}^p$ and $\mathcal{Q} \subset \mathbb{R}^q$ be two lattice polytopes.
Let $v$ be any point in $\mathcal{P} * \mathcal{Q}$. Then $v$ can be expressed as a convex combination of points from the embedded $\mathcal{P}$ and $\mathcal{Q}$:
$$v = \alpha(x, \mathbf{0}_q, 1) + (1-\alpha)(\mathbf{0}_p, y, 0) = (\alpha x, (1-\alpha)y, \alpha),$$
for some $x \in \mathcal{P}$, $y \in \mathcal{Q}$, and $\alpha \in [0, 1]$.

In particular, the set of lattice points of their join $\mathcal{P} * \mathcal{Q}$ is precisely given by
$$(\mathcal{P} * \mathcal{Q}) \cap \mathbb{Z}^{p+q+1} = \left( (\mathcal{P} \cap \mathbb{Z}^p) \times \{\mathbf{0}_q\} \times \{1\} \right) \cup \left( \{\mathbf{0}_p\} \times (\mathcal{Q} \cap \mathbb{Z}^q) \times \{0\} \right).$$
\end{lem}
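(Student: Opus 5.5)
The plan has two parts: the convex-combination description of an arbitrary point, and then the lattice-point description, obtained by looking at the last coordinate.

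\textbf{First part.} Write $\widetilde{\mathcal{P}} = \{(x,\mathbf{0}_q,1) : x\in\mathcal{P}\}$ and $\widetilde{\mathcal{Q}} = \{(\mathbf{0}_p,y,0): y\in\mathcal{Q}\}$. Both sets are convex, so every point of $\operatorname{conv}(\widetilde{\mathcal{P}}\cup\widetilde{\mathcal{Q}})$ is a convex combination of one point of $\widetilde{\mathcal{P}}$ and one point of $\widetilde{\mathcal{Q}}$. Concretely, take a convex combination $v=\sum_i \lambda_i u_i + \sum_j \mu_j w_j$ with $u_i\in\widetilde{\mathcal{P}}$ and $w_j\in\widetilde{\mathcal{Q}}$. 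Set $\alpha=\sum_i\lambda_i$. If $0<\alpha<1$, put $x$ equal to the $\mathbb{R}^p$-part of $\alpha^{-1}\sum_i\lambda_i u_i$, and $y$ equal to the $\mathbb{R}^q$-part of $(1-\alpha)^{-1}\sum_j\mu_j w_j$. Convexity of $\mathcal{P}$ and $\mathcal{Q}$ gives $x\in\mathcal{P}$ and $y\in\mathcal{Q}$. In the degenerate cases $\alpha\in\{0,1\}$, choose the unused point arbitrarily. This yields $v=(\alpha x,(1-\alpha)y,\alpha)$.

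\textbf{Second part.} The inclusion $\supseteq$ is immediate, since vertices of $\mathcal{P}$ embed into the join and lattice points stay lattice points. For $\subseteq$, take a lattice point $v=(\alpha x,(1-\alpha)y,\alpha)$. Its last coordinate $\alpha$ lies in $[0,1]\cap\mathbb{Z}$, so $\alpha\in\{0,1\}$.
\begin{itemize}
\item If $\alpha=1$, then $v=(x,\mathbf{0}_q,1)$. Integrality of $v$ forces $x\in\mathcal{P}\cap\mathbb{Z}^p$.
\item If $\alpha=0$, then $v=(\mathbf{0}_p,y,0)$, and similarly $y\in\mathcal{Q}\cap\mathbb{Z}^q$.
\end{itemize}
The two pieces are disjoint because their last coordinates differ.

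The only step needing care is the reduction of an arbitrary convex combination to a two-point combination, specifically the normalization by $\alpha$ and $1-\alpha$ together with the degenerate cases. Everything after that is a one-line integrality observation on the height coordinate.
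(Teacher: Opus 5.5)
Your proposal is correct and follows essentially the same route as the paper. Like the paper, you split the convex weights into a $\mathcal{P}$-part and a $\mathcal{Q}$-part, normalize by $\alpha$ and $1-\alpha$, and read off $\alpha\in\{0,1\}$ from integrality of the last coordinate; the paper works with the vertex matrices $X,Y$ rather than arbitrary points of the embedded copies, and for $\supseteq$ what you really need is that every point of $\widetilde{\mathcal{P}}$ and $\widetilde{\mathcal{Q}}$ (not just the vertices) lies in the join, which is immediate from the definition.
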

\begin{proof}
Let the columns of matrices $X$ and $Y$ be the embedded vertices of $\mathcal{P}$ and $\mathcal{Q}$ in $\mathbb{R}^{p+q+1}$, respectively. Since $v \in \mathcal{P} * \mathcal{Q}$, there exist weight vectors $w_1, w_2 \ge \mathbf{0}$ such that $v = X w_1 + Y w_2$ and $\mathbf{1}^T w_1 + \mathbf{1}^T w_2 = 1$.

Let $\alpha = \mathbf{1}^T w_1$. Assuming $\alpha \in (0, 1)$ (the cases $\alpha \in \{0, 1\}$ are trivial), we can rewrite $v$ as
$$v = \alpha \left( X \frac{w_1}{\alpha} \right) + (1-\alpha) \left( Y \frac{w_2}{1-\alpha} \right).$$
The vector $X(w_1 / \alpha)$ is a convex combination of the embedded vertices of $\mathcal{P}$, yielding $X(w_1 / \alpha) = (x, \mathbf{0}_q, 1)$ for some $x \in \mathcal{P}$. Similarly, $Y(w_2 / (1-\alpha)) = (\mathbf{0}_p, y, 0)$ for some $y \in \mathcal{Q}$. Substituting these back into the equation for $v$ establishes the first claim.

For the second claim, the integrality of $v = (\alpha x, (1-\alpha)y, \alpha)$ trivially forces $\alpha \in \{0, 1\}$. This immediately restricts $v$ to either $(x, \mathbf{0}_q, 1)$ with $x \in \mathcal{P} \cap \mathbb{Z}^p$, or $(\mathbf{0}_p, y, 0)$ with $y \in \mathcal{Q} \cap \mathbb{Z}^q$, yielding the desired disjoint union.
\end{proof}

\begin{thm}\label{Theorem-IDP-Join}
Let $\mathcal{P} \subset \mathbb{R}^p$ and $\mathcal{Q} \subset \mathbb{R}^q$ be lattice polytopes possessing the integer decomposition property (IDP). Then their join $\mathcal{P} * \mathcal{Q}$ also possesses the integer decomposition property.
\end{thm}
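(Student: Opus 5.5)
Fix a positive integer $k$ and a lattice point $z \in k(\mathcal{P} * \mathcal{Q}) \cap \mathbb{Z}^{p+q+1}$. The goal is to write $z$ as a sum of $k$ lattice points of $\mathcal{P} * \mathcal{Q}$. Lemma~\ref{Lemma-IDP-two-convex-comb} tells us what these points look like: each lies either in the ``top'' copy $(\mathcal{P}\cap\mathbb{Z}^p)\times\{\mathbf{0}_q\}\times\{1\}$ or in the ``bottom'' copy $\{\mathbf{0}_p\}\times(\mathcal{Q}\cap\mathbb{Z}^q)\times\{0\}$. So the plan is to split $z$ into a part handled by $\mathcal{P}$ and a part handled by $\mathcal{Q}$, and then apply IDP separately to each factor.

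\emph{Step 1: describe $z$.} Dilating the first statement of Lemma~\ref{Lemma-IDP-two-convex-comb} by $k$ gives
\[
z = k\bigl(\alpha x, (1-\alpha) y, \alpha\bigr) = (k\alpha x,\; k(1-\alpha) y,\; k\alpha)
\]
for some $x\in\mathcal{P}$, $y\in\mathcal{Q}$ and $\alpha\in[0,1]$. Set $m = k\alpha$. This is the last coordinate of $z$, so $m$ is an integer with $0\le m\le k$. Then the first block $u := m x$ is a lattice point of $m\mathcal{P}$. Likewise the middle block $w := (k-m) y$ is a lattice point of $(k-m)\mathcal{Q}$.

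\emph{Step 2: apply IDP to each factor.} Suppose $1\le m$. Since $\mathcal{P}$ has IDP, we can write $u = x_1+\cdots+x_m$ with each $x_i\in\mathcal{P}\cap\mathbb{Z}^p$. Suppose $m\le k-1$. Since $\mathcal{Q}$ has IDP, we can write $w = y_1+\cdots+y_{k-m}$ with each $y_j\in\mathcal{Q}\cap\mathbb{Z}^q$. Then
\[
z = \sum_{i=1}^{m} (x_i,\mathbf{0}_q,1) + \sum_{j=1}^{k-m} (\mathbf{0}_p, y_j, 0).
\]
This is a sum of exactly $k$ lattice points of $\mathcal{P} * \mathcal{Q}$, as required.

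\emph{Step 3: boundary cases.} When $m=0$ we have $\alpha=0$, so $z=(\mathbf{0}_p, ky, 0)$ and only the $\mathcal{Q}$-decomposition is used. When $m=k$ we have $\alpha=1$, so $z=(kx,\mathbf{0}_q,k)$ and only the $\mathcal{P}$-decomposition is used. In both cases the point with coefficient $0$ in the convex combination drops out, so no argument about that point is needed.

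The main point needing care is Step 1. One must check that the convex-combination description of Lemma~\ref{Lemma-IDP-two-convex-comb} scales correctly under dilation, so that $m x$ really is a lattice point of $m\mathcal{P}$. This holds because $m x$ equals the integer block $z|_{\mathbb{R}^p}$, and $x \in \mathcal{P}$ gives $m x\in m\mathcal{P}$. The same reasoning applies to the middle block and $(k-m)\mathcal{Q}$. Everything else is bookkeeping.
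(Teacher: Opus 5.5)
Your proof is correct and follows the paper's argument essentially verbatim. You use Lemma~\ref{Lemma-IDP-two-convex-comb} to write $z=(k_1x,\,k_2y,\,k_1)$, where $k_1=k\alpha$ is the integer last coordinate; you then apply IDP separately in $\mathcal{P}$ and $\mathcal{Q}$ and reassemble into $k_1+k_2=k$ lattice points of the join, with your explicit treatment of $m=0$ and $m=k$ merely spelling out what the paper handles implicitly via empty sums.
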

\begin{proof}
Let $k \in \mathbb{Z}_{>0}$ and $z \in k(\mathcal{P} * \mathcal{Q}) \cap \mathbb{Z}^{p+q+1}$.
It suffices to show that $z$ is a sum of $k$ lattice points in $\mathcal{P} * \mathcal{Q}$.
By Lemma~\ref{Lemma-IDP-two-convex-comb}, the lattice point $z$ takes the form
$$z = k(\alpha x, (1-\alpha)y, \alpha) = (k\alpha x, k(1-\alpha)y, k\alpha), \quad \text{where } \alpha \in [0, 1].$$
Let $k_1 = k\alpha \in \mathbb{Z}$ and $k_2 = k(1-\alpha) = k - k_1$. Since $\alpha \in [0,1]$, it immediately follows that $k_1, k_2 \in \{0, 1, \dots, k\}$.

Writing $z = (k_1 x, k_2 y, k_1)$, we note that $k_1 x \in k_1 \mathcal{P} \cap \mathbb{Z}^p$ and $k_2 y \in k_2 \mathcal{Q} \cap \mathbb{Z}^q$. The IDP assumption on $\mathcal{P}$ and $\mathcal{Q}$ guarantees the existence of $u_i \in \mathcal{P} \cap \mathbb{Z}^p$ and $w_j \in \mathcal{Q} \cap \mathbb{Z}^q$ satisfying
$$ k_1 x = \sum_{i=1}^{k_1} u_i \quad \text{and} \quad k_2 y = \sum_{j=1}^{k_2} w_j.$$
Substituting these decompositions back into the expression for $z$, we obtain
$$z = \left( \sum_{i=1}^{k_1} u_i, \sum_{j=1}^{k_2} w_j, k_1 \right)=\sum_{i=1}^{k_1} (u_i, \mathbf{0}_q, 1) + \sum_{j=1}^{k_2} (\mathbf{0}_p, w_j, 0).$$
Observe that for each $i$, $(u_i, \mathbf{0}_q, 1) \in (\mathcal{P} * \mathcal{Q}) \cap \mathbb{Z}^{p+q+1}$, and for each $j$, $(\mathbf{0}_p, w_j, 0) \in (\mathcal{P} * \mathcal{Q}) \cap \mathbb{Z}^{p+q+1}$.
This decomposition uses $k_1 + k_2 = k$ points. The conclusion follows.
\end{proof}

\subsection{Very ample property}

\begin{thm}\label{Theorem-Veryample-Join}
The join of two very ample lattice polytopes is not necessarily very ample.
\end{thm}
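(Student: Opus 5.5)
The plan is to build a counterexample that exploits the rigidity in the proof of Theorem~\ref{Theorem-IDP-Join}. By Lemma~\ref{Lemma-IDP-two-convex-comb}, every lattice point of $\mathcal{P}*\mathcal{Q}$ has last coordinate $0$ or $1$. So a decomposition of a lattice point $z\in k(\mathcal{P}*\mathcal{Q})$ with last coordinate $k_1$ must use exactly $k_1$ points coming from $\mathcal{P}$ and $k-k_1$ points coming from $\mathcal{Q}$. Very ampleness of the join only asks for decompositions when $k$ is large, but $k_1$ can be kept small while $k$ grows. Hence any low-level failure of IDP in $\mathcal{P}$ persists in the join at every large dilation.

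\emph{Step 1: choose the factors.} Take $\mathcal{P}\subset\mathbb{R}^p$ to be a lattice polytope that is very ample but not IDP; such polytopes exist by the references cited after the theorem on increasing strength (\cite{Bruns2013}, \cite[Section 9]{Balletti2021}, \cite{Ferroni23}), and I would quote one explicit example there. Take $\mathcal{Q}=[0,1]$, which is IDP and hence very ample. Since $\mathcal{P}$ is not IDP, there is an integer $m\ge 2$ and a lattice point $u\in m\mathcal{P}\cap\mathbb{Z}^p$ that is not a sum of $m$ lattice points of $\mathcal{P}$.

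\emph{Step 2: the bad points in the join.} For each $k\ge m$, consider
$$z_k=(u,\,0,\,m)\in\mathbb{Z}^{p+1+1}.$$
Write $u=mx$ with $x\in\mathcal{P}$ and set $\alpha=m/k\in[0,1]$. Then
$$z_k=k\bigl(\alpha x,\,(1-\alpha)\cdot 0,\,\alpha\bigr),$$
and Lemma~\ref{Lemma-IDP-two-convex-comb} shows $z_k\in k(\mathcal{P}*\mathcal{Q})$.

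\emph{Step 3: no decomposition exists.} Suppose $z_k$ were a sum of $k$ lattice points of $\mathcal{P}*\mathcal{Q}$. By Lemma~\ref{Lemma-IDP-two-convex-comb}, each summand is either $(u_i,0,1)$ with $u_i\in\mathcal{P}\cap\mathbb{Z}^p$, or $(\mathbf{0}_p,w_j,0)$ with $w_j\in\{0,1\}$. Comparing last coordinates, exactly $m$ summands are of the first type. Comparing the first $p$ coordinates gives $u=u_1+\dots+u_m$, which contradicts the choice of $u$. (The middle coordinates force all $w_j=0$, but this is not needed.) Hence the IDP condition fails for every $k\ge m$, so $\mathcal{P}*\mathcal{Q}$ is not very ample.

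The main obstacle is only Step 1: exhibiting or citing a concrete very ample, non-IDP polytope together with a witness level $m$ and point $u$. The rest is immediate from Lemma~\ref{Lemma-IDP-two-convex-comb}. If a citation is deemed insufficient, I would reproduce the standard Bruns–Gubeladze-type example from \cite{Bruns2013} and verify the witness $u$ by direct computation.
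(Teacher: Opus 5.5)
Your proposal is correct and follows the paper's argument: take $\mathcal{P}$ very ample but not IDP, lift a non-decomposable $u\in m\mathcal{P}\cap\mathbb{Z}^p$ to a point $(u,y,m)\in k(\mathcal{P}*\mathcal{Q})$, and use Lemma~\ref{Lemma-IDP-two-convex-comb} to force exactly $m$ summands from $\mathcal{P}$. The paper allows any nonempty $\mathcal{Q}$ and any $y\in(k-m)\mathcal{Q}$; your choice $\mathcal{Q}=[0,1]$ with $y=0$, and your explicit treatment of every $k\ge m$ rather than a single ``sufficiently large'' $k$, is a specialization that makes the failure of very ampleness, if anything, more transparent.
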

\begin{proof}
We give a structural counterexample showing that if $\mathcal{P} \subset \mathbb{R}^p$ is a very ample polytope without the integer decomposition property (IDP), and $\mathcal{Q} \subset \mathbb{R}^q$ is any non-empty lattice polytope, then their join $\mathcal{P} * \mathcal{Q}$ is not very ample.
Such polytopes $\mathcal{P}$ exist by Section \ref{Section-2-Preliminary}.

By assumption, $\mathcal{P}$ does not possess the IDP. Hence there exists a positive integer $c$ and a lattice point $x \in c\mathcal{P} \cap \mathbb{Z}^p$ that cannot be written as a sum of $c$ lattice points in $\mathcal{P}$.
Since $\mathcal{P}$ is very ample, there exists a sufficiently large integer $k>c$ such that every lattice point in its $k$-th dilate decomposes into a sum of $k$ lattice points of the polytope.

Let $k_2 = k - c$. Since $\mathcal{Q}$ is non-empty, we may choose a lattice point $y \in k_2 \mathcal{Q} \cap \mathbb{Z}^q$  (with the convention that if $k_2 = 0$, $y = \mathbf{0}_q$).
Consider the point $z = (x, y, c) \in \mathbb{Z}^{p+q+1}$. We can rewrite $z$ as
\begin{align*}
z = k \left( \frac{c}{k} \cdot \frac{x}{c}, \frac{k-c}{k} \cdot \frac{y}{k-c}, \frac{c}{k} \right).
\end{align*}
Observe that $\frac{x}{c} \in \mathcal{P}$ and $\frac{y}{k-c} \in \mathcal{Q}$. It implies that $\frac{1}{k}z$ is a convex combination of
$$\left(\frac{x}{c}, \mathbf{0}_q, 1\right) \in \mathcal{P} \times \{\mathbf{0}_q\} \times \{1\}\quad \text{and}\quad
\left(\mathbf{0}_p, \frac{y}{k-c}, 0\right) \in \{\mathbf{0}_p\} \times \mathcal{Q} \times \{0\},$$
with coefficients $\frac{c}{k}$ and $\frac{k-c}{k}$, respectively. Hence, $z \in k(\mathcal{P} * \mathcal{Q}) \cap \mathbb{Z}^{p+q+1}$.

Assume, for the sake of contradiction, that $z$ admits an integer decomposition in $\mathcal{P} * \mathcal{Q}$.
That is, suppose $z$ can be written as the sum of $k$ lattice points belonging to $(\mathcal{P} * \mathcal{Q}) \cap \mathbb{Z}^{p+q+1}$. By Lemma \ref{Lemma-IDP-two-convex-comb}, the assumed decomposition takes the form
\begin{align}\label{equation-z-of-IDP} %tao: no since
z = \sum_{i=1}^{m_1} (u_i, \mathbf{0}_q, 1) + \sum_{j=1}^{m_2} (\mathbf{0}_p, v_j, 0),
\end{align}
where $m_1 + m_2 = k$, with $u_i \in \mathcal{P} \cap \mathbb{Z}^p$ and $v_j \in \mathcal{Q} \cap \mathbb{Z}^q$.

Comparing the final coordinate of both sides of Equation~\eqref{equation-z-of-IDP} yields $m_1 = c$.
Next, comparing the first $p$ coordinates yields
$$x = \sum_{i=1}^{c} u_i.$$
This shows that $x$ is a sum of exactly $c$ lattice points in $\mathcal{P}$, contradicting our initial assumption.

Consequently, $\mathcal{P} * \mathcal{Q}$ is not very ample. This completes the proof.
\end{proof}

\subsection{Spanning property}

\begin{thm}\label{Theorem-Spanning-Join}
Let $\mathcal{P} \subset \mathbb{R}^p$ and $\mathcal{Q} \subset \mathbb{R}^q$ be spanning lattice polytopes. Then their join $\mathcal{P} * \mathcal{Q}$ is also a spanning polytope.
\end{thm}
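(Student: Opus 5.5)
We verify the criterion of Lemma~\ref{lem-spanning-equ} for $S=(\mathcal{P}*\mathcal{Q})\cap\mathbb{Z}^{p+q+1}$. By Lemma~\ref{Lemma-IDP-two-convex-comb}, $S$ is exactly the union of the embedded lattice points $(u,\mathbf{0}_q,1)$ with $u\in\mathcal{P}\cap\mathbb{Z}^p$ and $(\mathbf{0}_p,w,0)$ with $w\in\mathcal{Q}\cap\mathbb{Z}^q$. The point is to transport integer affine combinations from $\mathcal{P}$ and $\mathcal{Q}$ into the join, using the last coordinate to separate the two layers. It is convenient to show directly that every $z=(a,b,c)\in\mathbb{Z}^{p+q+1}$ lies in $\operatorname{aff}_{\mathbb{Z}}(S)$, which is equivalent to the criterion.

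First, since $\mathcal{P}$ is spanning, for any $a\in\mathbb{Z}^p$ we can write $a=\sum_i\lambda_iu_i$ with $u_i\in\mathcal{P}\cap\mathbb{Z}^p$, $\lambda_i\in\mathbb{Z}$ and $\sum\lambda_i=1$. Likewise, $b=\sum_j\mu_jw_j$ with $w_j\in\mathcal{Q}\cap\mathbb{Z}^q$ and $\sum\mu_j=1$. Then
$$\sum_i\lambda_i(u_i,\mathbf{0}_q,1)=(a,\mathbf{0}_q,1),\qquad \sum_j\mu_j(\mathbf{0}_p,w_j,0)=(\mathbf{0}_p,b,0).$$
Thus both $(a,\mathbf{0}_q,1)$ and $(\mathbf{0}_p,b,0)$ lie in $\operatorname{aff}_{\mathbb{Z}}(S)$, as integer affine combinations of points of $S$.

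Next, fix $u_0\in\mathcal{P}\cap\mathbb{Z}^p$ and $w_0\in\mathcal{Q}\cap\mathbb{Z}^q$. Since $\operatorname{aff}_{\mathbb{Z}}(S)$ is closed under integer affine combinations, we may form
$$(a,\mathbf{0}_q,1)+(\mathbf{0}_p,b,0)-(\mathbf{0}_p,w_0,0)+(c-1)\bigl[(u_0,\mathbf{0}_q,1)-(\mathbf{0}_p,\mathbf{0}_q,0)'\bigr],$$
but this requires some care, because a difference of points is a vector rather than a point. The clean route is to produce the vectors $(\mathbf{0}_p,\mathbf{0}_q,1)$, $(e_i,\mathbf{0}_q,0)$ and $(\mathbf{0}_p,e_j,0)$ as integer combinations of points of $S$ with coefficient sum zero.

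The vector $(e_i,\mathbf{0}_q,0)$ is the difference of the points $(e_i,\mathbf{0}_q,1)$ and $(\mathbf{0}_p,\mathbf{0}_q,1)$, and both points lie in the affine span by the first step. Similarly, $(\mathbf{0}_p,e_j,0)$ is the difference of $(\mathbf{0}_p,e_j,0)$ and $(\mathbf{0}_p,\mathbf{0}_q,0)$. The vector $(\mathbf{0}_p,\mathbf{0}_q,1)$ is the difference of $(\mathbf{0}_p,\mathbf{0}_q,1)$ and $(\mathbf{0}_p,\mathbf{0}_q,0)$. The origin $(\mathbf{0}_p,\mathbf{0}_q,0)$ itself belongs to the affine span by the first step with $b=\mathbf{0}_q$.

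Expanding each of these points as an affine combination of elements of $S$, every difference above becomes an integer combination of points of $S$ whose coefficients sum to $1-1=0$. This verifies both conditions of Lemma~\ref{lem-spanning-equ}, so $\mathcal{P}*\mathcal{Q}$ is spanning.

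No step is a real obstacle; the only subtlety is bookkeeping. One must check that the coefficient sums stay equal to $1$ for points and $0$ for vectors. One must also use the last coordinate, which is the only way to generate $e_{p+q+1}$. That comes from one layer sitting at height $1$ and the other at height $0$, so taking a difference across the two layers produces it.
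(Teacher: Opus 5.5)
Your proof is correct and is essentially the paper's argument written out without matrices. Multiplying the join's augmented matrix by $\operatorname{diag}(U,V)$ produces exactly your layer-wise objects: the vectors $(e_i,\mathbf{0}_q,0)$ and $(\mathbf{0}_p,e_j,0)$, and the points $(\mathbf{0}_p,\mathbf{0}_q,1)$ and $(\mathbf{0}_p,\mathbf{0}_q,0)$. The paper's final column operation is then your cross-layer difference giving $e_{p+q+1}$, so the two arguments coincide. You should delete the aborted middle paragraph with the undefined primed point, together with the unused $u_0,w_0$.
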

\begin{proof}
Let $M_\mathcal{P}$ and $M_\mathcal{Q}$ be the matrices whose columns are precisely the lattice points of $\mathcal{P}$ and $\mathcal{Q}$, respectively. Consider their augmented matrices
$$ \widehat{M}_\mathcal{P} = \begin{pmatrix} M_\mathcal{P} \\ \mathbf{1}^T \end{pmatrix} \quad \text{and} \quad \widehat{M}_\mathcal{Q} = \begin{pmatrix} M_\mathcal{Q} \\ \mathbf{1}^T \end{pmatrix}. $$
By Lemma~\ref{lem-spanning-equ}, there exist integer matrices $U$ and $V$ such that
$$ \widehat{M}_\mathcal{P} U = \begin{pmatrix} I_p & \mathbf{0} \\ \mathbf{0}^T & 1 \end{pmatrix} \quad \text{and} \quad \widehat{M}_\mathcal{Q} V = \begin{pmatrix} I_q & \mathbf{0} \\ \mathbf{0}^T & 1 \end{pmatrix}, $$
where $I_k$ denotes the $k \times k$ identity matrix.

By Lemma \ref{Lemma-IDP-two-convex-comb}, the augmented matrix of the join $\mathcal{P}*\mathcal{Q}$ is given by
$$ \widehat{M}_{\mathcal{P}*\mathcal{Q}} = \begin{pmatrix} M_\mathcal{P} & \mathbf{0}  \\ \mathbf{0} & M_\mathcal{Q} \\ \mathbf{1}^T & \mathbf{0}^T \\ \mathbf{1}^T & \mathbf{1}^T \end{pmatrix}. $$
Right-multiplication by the block diagonal matrix $\operatorname{diag}(U, V)$ gives
$$
\widehat{M}_{\mathcal{P}*\mathcal{Q}} \begin{pmatrix} U & \mathbf{0} \\ \mathbf{0} & V \end{pmatrix}
= \begin{pmatrix} M_\mathcal{P} U & \mathbf{0}\\ \mathbf{0} & M_\mathcal{Q} V \\ \mathbf{1}^T U & \mathbf{0}^T V \\ \mathbf{1}^T U & \mathbf{1}^T V \end{pmatrix}
= \begin{pmatrix}
I_p & \mathbf{0} & \mathbf{0} & \mathbf{0} \\
\mathbf{0} & \mathbf{0} & I_q & \mathbf{0} \\
\mathbf{0}^T & 1 & \mathbf{0}^T & 0 \\
\mathbf{0}^T & 1 & \mathbf{0}^T & 1
\end{pmatrix}.
$$
Finally, right-multiplying this result by the following matrix yields the identity matrix:
$$
\begin{pmatrix}
I_p & \mathbf{0} & \mathbf{0} & \mathbf{0} \\
\mathbf{0} & \mathbf{0} & I_q & \mathbf{0} \\
\mathbf{0}^T & 1 & \mathbf{0}^T & 0 \\
\mathbf{0}^T & 1 & \mathbf{0}^T & 1
\end{pmatrix}
\begin{pmatrix}
I_p & \mathbf{0} & \mathbf{0} & \mathbf{0} \\
\mathbf{0}^T & \mathbf{0}^T & 1 & 0 \\
\mathbf{0} & I_q & \mathbf{0} & \mathbf{0} \\
\mathbf{0}^T & \mathbf{0}^T & -1 & 1
\end{pmatrix}
= I_{p+q+2}.
$$
This demonstrates that the augmented matrix $\widehat{M}_{\mathcal{P}*\mathcal{Q}}$ can be transformed into the identity matrix via integer column operations, confirming that its integer column span is the entire ambient space. Thus, $\mathcal{P} * \mathcal{Q}$ is a spanning polytope.
\end{proof}

\section{Triangulation}\label{Section-Six-Trign}

This section focuses on triangulations of the join of two lattice polytopes, in particular unimodular triangulations, regular triangulations, and quadratic triangulations.
Before that, we state the following preparatory lemma.

\subsection{Preliminary lemma}

A result analogous to Lemma \ref{Lemma-triangulation-join} can be found in \cite[Theorem 4.2.7]{DeLoera2010}, concerning the join of any two subdivisions of point configurations.
Up to affine unimodular equivalence, we assume throughout this section that all lattice polytopes are full-dimensional.

\begin{lem}\label{Lemma-triangulation-join}
Let $\mathcal{P} \subset \mathbb{R}^p$ and $\mathcal{Q} \subset \mathbb{R}^q$ be polytopes with triangulations $\mathcal{T}_\mathcal{P}$ and $\mathcal{T}_\mathcal{Q}$, respectively. The collection of simplices defined by
$$\mathcal{T}_{\mathcal{P}*\mathcal{Q}} = \{ \Delta_\mathcal{P} * \Delta_\mathcal{Q} \mid \Delta_\mathcal{P} \in \mathcal{T}_\mathcal{P}, \Delta_\mathcal{Q} \in \mathcal{T}_\mathcal{Q} \}$$
forms a triangulation of the join polytope $\mathcal{P} * \mathcal{Q}$.
\end{lem}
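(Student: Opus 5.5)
We verify the two conditions of Definition~\ref{Dfn-triangulation}, after first checking that each $\Delta_\mathcal{P} * \Delta_\mathcal{Q}$ is a full-dimensional simplex in $\mathcal{P}*\mathcal{Q}$. Since $\mathcal{P},\mathcal{Q}$ are full-dimensional, each $\Delta_\mathcal{P}\in\mathcal{T}_\mathcal{P}$ is a $p$-simplex and each $\Delta_\mathcal{Q}\in\mathcal{T}_\mathcal{Q}$ is a $q$-simplex. By Proposition~\ref{Prop-Join-Properties}(iii) (or directly by affine independence of the $p+q+2$ embedded vertices), $\Delta_\mathcal{P}*\Delta_\mathcal{Q}$ is a $(p+q+1)$-simplex. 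Because the embedding is convex-hull monotone, it is contained in $\mathcal{P}*\mathcal{Q}$.

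\emph{Covering.} Let $v\in\mathcal{P}*\mathcal{Q}$. By Lemma~\ref{Lemma-IDP-two-convex-comb}, write $v=\alpha(x,\mathbf 0_q,1)+(1-\alpha)(\mathbf 0_p,y,0)$ with $x\in\mathcal{P}$, $y\in\mathcal{Q}$ and $\alpha\in[0,1]$. Choose $\Delta_\mathcal{P}\ni x$ and $\Delta_\mathcal{Q}\ni y$ from the triangulations. Then $v\in\Delta_\mathcal{P}*\Delta_\mathcal{Q}$, so the union of the simplices is all of $\mathcal{P}*\mathcal{Q}$.

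\emph{Intersection property.} This is the main step. The key tool is a uniqueness statement for the representation in Lemma~\ref{Lemma-IDP-two-convex-comb}. Given $v=(a,b,\alpha)$, the last coordinate determines $\alpha$. If $\alpha\in(0,1)$, then $x=a/\alpha$ and $y=b/(1-\alpha)$ are determined. If $\alpha=1$, only $x$ is determined and $y$ is irrelevant; if $\alpha=0$, only $y$ is determined.

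Now take a point $v\in(\Delta_1*\Delta_2)\cap(\Delta_1'*\Delta_2')$ with $\alpha\in(0,1)$. Uniqueness gives $x\in\Delta_1\cap\Delta_1'$ and $y\in\Delta_2\cap\Delta_2'$. By hypothesis, $F=\Delta_1\cap\Delta_1'$ is a common face of $\Delta_1,\Delta_1'$, and $G=\Delta_2\cap\Delta_2'$ is a common face of $\Delta_2,\Delta_2'$. The boundary cases are handled with the convention that the join with an empty face is the other factor:
\begin{itemize}
\item if $\alpha=1$, then $v\in F*\emptyset$;
\item if $\alpha=0$, then $v\in\emptyset*G$.
\end{itemize}
Hence
\[
(\Delta_1*\Delta_2)\cap(\Delta_1'*\Delta_2')=F*G,
\]
where $F$ or $G$ may be empty. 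The reverse inclusion $F*G\subseteq$ both simplices is immediate.

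It remains to note that faces of a simplex join $\Delta_1*\Delta_2$ are exactly the joins $F_1*F_2$ of faces of $\Delta_1$ and $\Delta_2$, including empty faces, because all are spanned by vertex subsets. Therefore $F*G$ is a common face of both simplices.

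The only delicate point is tracking the degenerate cases $\alpha\in\{0,1\}$ and possibly empty intersections; the convention $F*\emptyset=F$ handles them uniformly. Alternatively, one could cite \cite[Theorem 4.2.7]{DeLoera2010} for joins of subdivisions of point configurations, but the direct argument above is self-contained.
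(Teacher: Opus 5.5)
Your proof is correct. The covering step is the same as the paper's. For the intersection step, both arguments arrive at $\sigma\cap\tau=F_\mathcal{P}*F_\mathcal{Q}$, but you justify each half differently.

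For the identity itself, the paper simply ``applies'' Lemma~\ref{Lemma-IDP-two-convex-comb}, which on its own is only an existence statement. Your remark that the last coordinate determines $\alpha$, and then $x=a/\alpha$ and $y=b/(1-\alpha)$ when $0<\alpha<1$, is exactly the uniqueness needed to place $x$ in $\Delta_1\cap\Delta_1'$ and $y$ in $\Delta_2\cap\Delta_2'$. So you make explicit a step the paper leaves tacit.

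For the common-face property, the paper constructs an exposing functional $U=(u_1,u_2,m_2-m_1)$ for each of $\sigma$ and $\tau$. You use only the fact that every vertex subset of a simplex spans a face. Your route is shorter and handles the degenerate situations ($\alpha\in\{0,1\}$, or $F$ or $G$ empty) uniformly through the convention $F*\emptyset=F$. The paper's choice of $m_1,m_2$ as maxima, by contrast, implicitly assumes both faces are nonempty. In exchange, the paper's functional argument never uses that the cells are simplices. It therefore carries over essentially unchanged to joins of arbitrary polyhedral subdivisions, in the spirit of \cite[Theorem 4.2.7]{DeLoera2010}, whereas your face step is specific to triangulations.
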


\begin{proof}
For any $x \in \mathcal{P}$ and $y \in \mathcal{Q}$, let $\widetilde{x} = (x, \mathbf{0}_q, 1)$ and $\widetilde{y} = (\mathbf{0}_p, y, 0)$. Similarly, for simplices $\Delta_\mathcal{P} \in \mathcal{T}_\mathcal{P}$ and ${\Delta}_\mathcal{Q} \in \mathcal{T}_\mathcal{Q}$, we define their embeddings as $\widetilde{\Delta}_\mathcal{P} = \Delta_\mathcal{P} \times \{\mathbf{0}_q\} \times \{1\}$ and $\widetilde{\Delta}_\mathcal{Q} = \{\mathbf{0}_p\} \times \Delta_\mathcal{Q} \times \{0\}$. By Proposition~\ref{Prop-Join-Properties}, the join of any two full-dimensional simplices is again a full-dimensional simplex.

First, we verify that the union of all simplices in $\mathcal{T}_{\mathcal{P}*\mathcal{Q}}$ equals $\mathcal{P} * \mathcal{Q}$. Let $z \in \mathcal{P} * \mathcal{Q}$. By Lemma~\ref{Lemma-IDP-two-convex-comb}, there exist $x \in \mathcal{P}$, $y \in \mathcal{Q}$, and $\alpha \in [0, 1]$ such that
$$ z = \alpha \widetilde{x} + (1 - \alpha) \widetilde{y}. $$
Because $\mathcal{T}_\mathcal{P}$ and $\mathcal{T}_\mathcal{Q}$ are triangulations of $\mathcal{P}$ and $\mathcal{Q}$, respectively, there exist simplices $\Delta_\mathcal{P} \in \mathcal{T}_\mathcal{P}$ and $\Delta_\mathcal{Q} \in \mathcal{T}_\mathcal{Q}$ such that $x \in \Delta_\mathcal{P}$ and $y \in \Delta_\mathcal{Q}$. Consequently, $z \in \Delta_\mathcal{P} * \Delta_\mathcal{Q}$, which implies that $\mathcal{P} * \mathcal{Q} \subseteq \bigcup_{\sigma \in \mathcal{T}_{\mathcal{P}*\mathcal{Q}}} \sigma$.
The reverse inclusion, $\bigcup_{\sigma \in \mathcal{T}_{\mathcal{P}*\mathcal{Q}}} \sigma \subseteq \mathcal{P} * \mathcal{Q}$, follows trivially from the definition of the join. This establishes the set equality $\bigcup_{\sigma \in \mathcal{T}_{\mathcal{P}*\mathcal{Q}}} \sigma = \mathcal{P} * \mathcal{Q}$.

Next, we must demonstrate that the intersection of any two simplices in $\mathcal{T}_{\mathcal{P}*\mathcal{Q}}$ is a common face of both. Let $\sigma = \Delta_{\mathcal{P},1} * \Delta_{\mathcal{Q},1}$ and $\tau = \Delta_{\mathcal{P},2} * \Delta_{\mathcal{Q},2}$ be two arbitrary simplices in $\mathcal{T}_{\mathcal{P}*\mathcal{Q}}$. Since $\mathcal{T}_\mathcal{P}$ and $\mathcal{T}_\mathcal{Q}$ are triangulations, the intersections $F_\mathcal{P} = \Delta_{\mathcal{P},1} \cap \Delta_{\mathcal{P},2}$ and $F_\mathcal{Q} = \Delta_{\mathcal{Q},1} \cap \Delta_{\mathcal{Q},2}$ are common faces of the respective simplices.

Applying Lemma~\ref{Lemma-IDP-two-convex-comb}, we obtain
$$
\sigma \cap \tau = (\Delta_{\mathcal{P},1} * \Delta_{\mathcal{Q},1}) \cap (\Delta_{\mathcal{P},2} * \Delta_{\mathcal{Q},2}) = (\Delta_{\mathcal{P},1} \cap \Delta_{\mathcal{P},2}) * (\Delta_{\mathcal{Q},1} \cap \Delta_{\mathcal{Q},2})= F_\mathcal{P} * F_\mathcal{Q}.
$$
By the supporting hyperplane characterization of faces, let $u_1,u_2$ be the normal vectors exposing $F_{\mathcal{P}},F_{\mathcal{Q}}$, and let $m_1,m_2$ be the corresponding maximum inner products over $\Delta_{\mathcal{P},1},\Delta_{\mathcal{Q},1}$.

Define $U = (u_1, u_2, m_2 - m_1)^T \in \mathbb{R}^{p+q+1}$. Any $z \in \sigma = \Delta_{\mathcal{P},1} * \Delta_{\mathcal{Q},1}$ takes the form $z = (\alpha x, (1-\alpha)y, \alpha)$ for $x \in \Delta_{\mathcal{P},1}, y \in \Delta_{\mathcal{Q},1}$, and $\alpha \in [0,1]$. Evaluating the inner product yields:
$$ \langle U, z \rangle = \alpha \langle u_1, x \rangle + (1-\alpha)\langle u_2, y \rangle + \alpha(m_2 - m_1) \le \alpha m_1 + (1-\alpha)m_2 + \alpha(m_2 - m_1) = m_2. $$

Equality holds if and only if $\alpha(m_1 - \langle u_1, x \rangle) + (1-\alpha)(m_2 - \langle u_2, y \rangle) = 0$. Since both summands are non-negative, this is equivalent to $x \in F_{\mathcal{P}}$ (for $\alpha > 0$) and $y \in F_{\mathcal{Q}}$ (for $\alpha < 1$), which precisely parametrizes the point set of $F_{\mathcal{P}} * F_{\mathcal{Q}}$.

Hence, the vector $U$ exposes $F_{\mathcal{P}} * F_{\mathcal{Q}}$ as a face of $\sigma$. By identical reasoning on $\tau = \Delta_{\mathcal{P},2} * \Delta_{\mathcal{Q},2}$ (since $F_{\mathcal{P}}$ and $F_{\mathcal{Q}}$ are also faces of $\Delta_{\mathcal{P},2}$ and $\Delta_{\mathcal{Q},2}$), there exists a corresponding normal vector $U'$ that exposes $F_{\mathcal{P}} * F_{\mathcal{Q}}$ as a face of $\tau$.

Therefore the join $F_\mathcal{P} * F_\mathcal{Q}$ is a common face of both $\sigma$ and $\tau$. Consequently, $\mathcal{T}_{\mathcal{P}*\mathcal{Q}}$ forms a triangulation of the join polytope $\mathcal{P} * \mathcal{Q}$.
\end{proof}

\subsection{Unimodular triangulation}

\begin{thm}\label{Theorem-Unimodular-Triangulation}
If $\mathcal{P}$ and $\mathcal{Q}$ admit unimodular triangulations $\mathcal{T}_\mathcal{P}$ and $\mathcal{T}_\mathcal{Q}$, respectively, then their join $\mathcal{P} * \mathcal{Q} \subset \mathbb{R}^{p+q+1}$ also admits a unimodular triangulation \(\mathcal{T}_{\mathcal{P}*\mathcal{Q}}\).
\end{thm}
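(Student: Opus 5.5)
We take $\mathcal{T}_{\mathcal{P}*\mathcal{Q}}$ to be the triangulation of Lemma~\ref{Lemma-triangulation-join}, built from the given unimodular triangulations $\mathcal{T}_\mathcal{P}$ and $\mathcal{T}_\mathcal{Q}$. That lemma already shows it is a triangulation of $\mathcal{P}*\mathcal{Q}$. By Proposition~\ref{Prop-Join-Properties}(iii), each $\Delta_\mathcal{P}*\Delta_\mathcal{Q}$ is a lattice simplex of dimension $p+q+1$. So the only thing left is unimodularity of every maximal simplex $\Delta_\mathcal{P}*\Delta_\mathcal{Q}$, with $\Delta_\mathcal{P}=\operatorname{conv}(a_0,\dots,a_p)$ and $\Delta_\mathcal{Q}=\operatorname{conv}(b_0,\dots,b_q)$ unimodular in $\mathbb{Z}^p$ and $\mathbb{Z}^q$.

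The key step is a determinant computation. Since the simplices are full-dimensional, a lattice simplex is unimodular exactly when the square matrix of its vertices, augmented by a row of ones, has determinant $\pm1$. For $\Delta_\mathcal{P}$ and $\Delta_\mathcal{Q}$ these are the $(p+1)\times(p+1)$ and $(q+1)\times(q+1)$ matrices
$$A=\begin{pmatrix} a_0 & \cdots & a_p\\ 1&\cdots&1\end{pmatrix},\qquad B=\begin{pmatrix} b_0 & \cdots & b_q\\ 1&\cdots&1\end{pmatrix},$$
and by assumption $\det A,\det B\in\{\pm1\}$. The embedded vertices $(a_i,\mathbf{0}_q,1)$ and $(\mathbf{0}_p,b_j,0)$ of the join give the $(p+q+2)\times(p+q+2)$ augmented matrix
$$\begin{pmatrix} M_A & \mathbf{0}\\ \mathbf{0} & M_B\\ \mathbf{1}^T & \mathbf{0}^T\\ \mathbf{1}^T&\mathbf{1}^T\end{pmatrix},$$
where $M_A,M_B$ are the vertex matrices. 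This has exactly the same shape as in the proof of Theorem~\ref{Theorem-Spanning-Join}. Subtract the row $(\mathbf{1}^T,\mathbf{0}^T)$ from the last row; this leaves the determinant unchanged and turns the last row into $(\mathbf{0}^T,\mathbf{1}^T)$. Then permute rows so the matrix becomes block diagonal $\operatorname{diag}(A,B)$. The determinant therefore equals $\pm\det A\det B=\pm1$.

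Equivalently, one can argue through normalized volume: the normalized volume is the absolute value of this determinant, or one can use Proposition~\ref{Prop-Join-Properties}(iv) since $h^*_{\Delta_\mathcal{P}*\Delta_\mathcal{Q}}=h^*_{\Delta_\mathcal{P}}h^*_{\Delta_\mathcal{Q}}=1$. Either way every maximal simplex of $\mathcal{T}_{\mathcal{P}*\mathcal{Q}}$ has normalized volume $1$, so the triangulation is unimodular.

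The main obstacle is purely bookkeeping. One must confirm that unimodularity in the sense of the paper's definition (edge vectors form a lattice basis) matches the augmented-determinant criterion. This holds under the standing full-dimensionality convention of the section. One must also check that the row operation and permutation are carried out correctly. The $h^*$ route avoids both issues, since a lattice simplex is unimodular if and only if its $h^*$-polynomial is $1$; I would present the determinant argument and cite the $h^*$ identity as a cross-check.
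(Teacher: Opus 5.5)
Your proposal is correct and follows the paper's proof. You use the join triangulation from Lemma~\ref{Lemma-triangulation-join} and show that each augmented vertex matrix of $\Delta_\mathcal{P}*\Delta_\mathcal{Q}$ has determinant $\pm\det A\det B=\pm1$, and you spell out the row subtraction and permutation that the paper covers with ``clearly.'' Your $h^*$ cross-check via Proposition~\ref{Prop-Join-Properties}(iv) is also valid: a full-dimensional lattice simplex is unimodular iff $h^*=1$, since $h^*(1)$ is the normalized volume, so this gives a determinant-free alternative.
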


\begin{proof}
To prove that the induced triangulation $\mathcal{T}_{\mathcal{P}*\mathcal{Q}}$ is unimodular, it suffices to show that $\Delta_{\mathcal{P}*\mathcal{Q}} = \Delta_\mathcal{P} * \Delta_\mathcal{Q} \in \mathcal{T}_{\mathcal{P}*\mathcal{Q}}$ is a unimodular simplex, where $\Delta_\mathcal{P} \in \mathcal{T}_\mathcal{P}$ and $\Delta_\mathcal{Q} \in \mathcal{T}_\mathcal{Q}$. Let $\widehat{V}_{1}$ and $\widehat{V}_{2}$ denote their respective augmented vertex matrices (formed by appending a bottom row of $1$s) of $\Delta_\mathcal{P}$ and $\Delta_\mathcal{Q}$.

We recall a fundamental equivalent characterization of a  full-dimensional unimodular simplex:  A lattice simplex is unimodular if and only if the absolute value of the determinant of its augmented vertex matrix is exactly $1$. Thus, we have
\begin{equation}\label{det-v1-v2}
\left| \det \widehat{V}_{1} \right| = 1 \quad \text{and} \quad
\left| \det \widehat{V}_{2} \right| = 1.
\end{equation}

Let $\widehat{M}$ be the augmented vertex matrix of $\Delta_\mathcal{P}* \Delta_\mathcal{Q}$.   Then by Lemma \ref{Lemma-IDP-two-convex-comb}, we have
$$ \widehat{M} = \begin{pmatrix} V_1 & \mathbf{0} \\ \mathbf{0} & V_2 \\ \mathbf{1}^T & \mathbf{0}^T \\ \mathbf{1}^T & \mathbf{1}^T \end{pmatrix},$$
where \(V_1\) and \(V_2\) are the vertex matrices of \(\Delta_\mathcal{P}\) and \(\Delta_\mathcal{Q}\), respectively.
Clearly, the absolute value of the determinant of \(\widehat{M}\) equals that of the matrix below.
\[
\widetilde{M}   = \begin{pmatrix} V_1 & \mathbf{0} \\ \mathbf{1}^T & \mathbf{0}^T \\ \mathbf{0} & V_2 \\ \mathbf{0}^T & \mathbf{1}^T \end{pmatrix}
= \begin{pmatrix}
\widehat{V}_{1}& \mathbf{0} \\
\mathbf{0} & \widehat{V}_{2}
\end{pmatrix}.
\]
By Equation~\eqref{det-v1-v2}, we obtain
$$|\det \widehat{M}| = |\det \widetilde{M} | = |\det \widehat{V}_{1}| \cdot |\det \widehat{V}_{2}|=1.$$
This proves that the simplex $\Delta_{\mathcal{P}*\mathcal{Q}}$ is unimodular. Thus, $\mathcal{T}_{\mathcal{P}*\mathcal{Q}}$ is a unimodular triangulation of $\mathcal{P} * \mathcal{Q}$. This completes the proof.
\end{proof}

\subsection{Regular unimodular triangulation}

Given a finite point configuration $A = \{a_1, a_2, \ldots, a_n\} \subset \mathbb{R}^p$ and the polytope $\mathcal{P} = \mathrm{conv}(A)$, randomly choose heights $h_1, h_2, \ldots, h_n \in \mathbb{R}$, and define the lifted polytope
$$\widehat{\mathcal{P}} := \mathrm{conv}\left\{(a_1, h_1), (a_2, h_2), \ldots, (a_n, h_n)\right\} \subseteq \mathbb{R}^{p+1}.$$

Following \cite[Chapter 3]{BeckRobins}, the \emph{lower hull} of $\widehat{\mathcal{P}}$ is the set of points $(x_1, \dots, x_{p+1}) \in \widehat{\mathcal{P}}$ visible from below (i.e., $(x_1, \dots, x_{p+1} - \epsilon) \notin \widehat{\mathcal{P}}$ for all $\epsilon > 0$). A face contained within this hull is called a \emph{lower face}. Equivalently, as noted in \cite[p.~55]{DeLoera2010}, a lower face is formed by the points of $\widehat{\mathcal{P}}$ satisfying a non-vertical hyperplane equation $x_{p+1} = H(x_1, \dots, x_p)$, subject to the supporting condition $H(y_1, \dots, y_p) \le y_{p+1}$ for all points $(y_1, \dots, y_{p+1}) \in \widehat{\mathcal{P}}$.

\begin{exa}
Let $\mathcal{P} = \operatorname{conv}\{v_1=(0,0),\, v_2=(1,0),\, v_3=(1,1),\, v_4=(0,1)\}$. To explicitly construct a regular triangulation of $\mathcal{P}$, we introduce a height function $\omega$ that lifts the vertices to $\widehat{v}_i = (v_i, \omega(v_i))$, forming the lifted polytope
$$ \widehat{\mathcal{P}} = \operatorname{conv}\{\widehat{v}_1=(0,0,0),\, \widehat{v}_2=(1,0,1),\, \widehat{v}_3=(1,1,0),\, \widehat{v}_4=(0,1,1)\} \subset \mathbb{R}^3. $$
The regular triangulation induced by $\omega$ is precisely the vertical projection of the lower hull of $\widehat{\mathcal{P}}$ onto $\mathbb{R}^2$.

The plane $z = x - y$ supports $\widehat{\mathcal{P}}$ from below. The points achieving equality are exactly $\{ \widehat{v}_1, \widehat{v}_2, \widehat{v}_3 \}$.
Similarly, the plane $z = y - x$ is another lower supporting hyperplane.

Consequently, the lower hull of $\widehat{\mathcal{P}}$ consists precisely of the two $2$-dimensional simplices $\operatorname{conv}(\widehat{v}_1, \widehat{v}_2, \widehat{v}_3)$ and $\operatorname{conv}(\widehat{v}_1, \widehat{v}_3, \widehat{v}_4)$.

Applying the canonical projection $\pi(x, y, z) = (x, y)$ to these lower faces, we obtain the $2$-simplices $\Delta_1 = \operatorname{conv}(v_1, v_2, v_3)$ and $\Delta_2 = \operatorname{conv}(v_1, v_3, v_4)$. These simplices intersect precisely at their common facet, the diagonal $\operatorname{conv}(v_1, v_3)$. By definition, the collection $\mathcal{T} = \{\Delta_1, \Delta_2\}$ forms a regular triangulation of $\mathcal{P}$.
\end{exa}

\begin{thm}\label{Theorem-Regular-Uniom-Triangu}
 If $\mathcal{P}$ and $\mathcal{Q}$ both admit regular unimodular triangulations $\mathcal{T}_\mathcal{P}$ and $\mathcal{T}_\mathcal{Q}$, then their join $\mathcal{P} * \mathcal{Q} \subset \mathbb{R}^{p+q+1}$ also admits a regular unimodular triangulation \(\mathcal{T}_{\mathcal{P}*\mathcal{Q}}\).
\end{thm}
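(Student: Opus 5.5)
By Theorem~\ref{Theorem-Unimodular-Triangulation} and Lemma~\ref{Lemma-triangulation-join}, the join triangulation $\mathcal{T}_{\mathcal{P}*\mathcal{Q}}=\{\Delta_\mathcal{P}*\Delta_\mathcal{Q}\}$ is already a unimodular triangulation of $\mathcal{P}*\mathcal{Q}$. So it remains only to show that it is regular when $\mathcal{T}_\mathcal{P}$ and $\mathcal{T}_\mathcal{Q}$ are regular. I will build an explicit height function on the lattice points of the join and check that its lower faces are exactly the joins of lower faces.

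\textbf{Step 1: the height function.} Let $\omega_\mathcal{P}$ on $A=\mathcal{P}\cap\mathbb{Z}^p$ induce $\mathcal{T}_\mathcal{P}$, and let $\omega_\mathcal{Q}$ on $B=\mathcal{Q}\cap\mathbb{Z}^q$ induce $\mathcal{T}_\mathcal{Q}$. Points of $A$ or $B$ that are not used as vertices receive heights strictly above the lower hull, which the regularity of the original triangulations permits. By Lemma~\ref{Lemma-IDP-two-convex-comb}, the lattice points of the join are $\widetilde a=(a,\mathbf 0_q,1)$ and $\widetilde b=(\mathbf 0_p,b,0)$. Define
\[
\omega(\widetilde a)=\omega_\mathcal{P}(a),\qquad \omega(\widetilde b)=\omega_\mathcal{Q}(b).
\]
No extra tilt is needed, because the last coordinate separates the two pieces.

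\textbf{Step 2: lower faces of the join are joins of lower faces.} Fix a maximal cell $\Delta_\mathcal{P}\in\mathcal{T}_\mathcal{P}$ with lower affine function $H_1(x)=\langle c_1,x\rangle+d_1$. This function satisfies $H_1(a)\le\omega_\mathcal{P}(a)$, with equality exactly on the vertices of $\Delta_\mathcal{P}$. Similarly, take $H_2(y)=\langle c_2,y\rangle+d_2$ for a cell $\Delta_\mathcal{Q}$. Define an affine function on $\mathbb{R}^{p+q+1}$ by
\[
H(x,y,s)=\langle c_1,x\rangle+\langle c_2,y\rangle+d_1 s+d_2(1-s).
\]
At $\widetilde a$ it equals $H_1(a)$, and at $\widetilde b$ it equals $H_2(b)$. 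Hence $H\le\omega$ on all lifted lattice points, with equality exactly on the vertices of $\Delta_\mathcal{P}*\Delta_\mathcal{Q}$. This shows that every $\Delta_\mathcal{P}*\Delta_\mathcal{Q}$ is the projection of a lower facet of the lifted join.

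\textbf{Step 3: these are all the cells.} The cells $\Delta_\mathcal{P}*\Delta_\mathcal{Q}$ already cover $\mathcal{P}*\mathcal{Q}$ and intersect properly, by Lemma~\ref{Lemma-triangulation-join}. The regular subdivision induced by $\omega$ is also a polyhedral subdivision, and it contains each of these full-dimensional simplices as a cell. Two subdivisions with the same union, one of which contains all cells of the other, must coincide. So the regular subdivision induced by $\omega$ is exactly $\mathcal{T}_{\mathcal{P}*\mathcal{Q}}$. Combined with unimodularity, this proves the theorem.

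The main obstacle is Step 2: making sure strict inequality holds at every non-vertex lattice point, including points lying in lower-dimensional faces, so that the lower facet is exactly the simplex and no larger cell. This follows from the strictness in the original regular triangulations, but it must be checked separately for points of $A$ and of $B$. That check is straightforward here because $H$ restricts to $H_1$ on the top layer ($s=1$) and to $H_2$ on the bottom layer ($s=0$).
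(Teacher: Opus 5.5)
Your proposal is correct and is essentially the paper's proof: the same height function $\omega(\widetilde a)=\omega_\mathcal{P}(a)$, $\omega(\widetilde b)=\omega_\mathcal{Q}(b)$, the same supporting functional (your $d_1 s+d_2(1-s)$ is exactly the paper's choice $c_t=a_\mathcal{P}-a_\mathcal{Q}$, $c_0=a_\mathcal{Q}$), and the same covering argument to rule out any further lower facets. Your caveat about lattice points not used as vertices is harmless but vacuous here, since a unimodular triangulation necessarily uses every lattice point of the polytope.
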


\begin{proof}
Let $\mathcal{T}_\mathcal{P}$ and $\mathcal{T}_\mathcal{Q}$ be regular unimodular triangulations of $\mathcal{P}$ and $\mathcal{Q}$, respectively.

By Theorem~\ref{Theorem-Unimodular-Triangulation}, we know that the combinatorial join $\mathcal{T}_{\mathcal{P}*\mathcal{Q}} = \{ \Delta_\mathcal{P} * \Delta_\mathcal{Q} \mid \Delta_\mathcal{P} \in \mathcal{T}_\mathcal{P}, \Delta_\mathcal{Q} \in \mathcal{T}_\mathcal{Q} \}$ is a unimodular triangulation of $\mathcal{P} * \mathcal{Q}$. Now we will construct a joint height function $\omega$ to prove that $\mathcal{T}_{\mathcal{P}*\mathcal{Q}}$ is exactly the regular triangulation induced by $\omega$.

By definition, the regular triangulation $\mathcal{T}_\mathcal{P}$ arises from a height function $\omega_\mathcal{P}: \mathcal{P} \cap \mathbb{Z}^p \to \mathbb{R}$ as the projection of the lower hull of
$$ \widehat{\mathcal{P}} = \operatorname{conv}\{ (u, \omega_\mathcal{P}(u)) \mid u \in \mathcal{P} \cap \mathbb{Z}^p \} \subset \mathbb{R}^{p+1}. $$
Likewise, $\mathcal{T}_\mathcal{Q}$ is induced by a height function $\omega_\mathcal{Q}: \mathcal{Q} \cap \mathbb{Z}^q \to \mathbb{R}$ via the lower hull projection of
$$ \widehat{\mathcal{Q}} = \operatorname{conv}\{ (v, \omega_\mathcal{Q}(v)) \mid v \in \mathcal{Q} \cap \mathbb{Z}^q \} \subset \mathbb{R}^{q+1}. $$

We define the joint height function $\omega: (\mathcal{P} * \mathcal{Q}) \cap \mathbb{Z}^{p+q+1} \to \mathbb{R}$.
By Lemma~\ref{Lemma-IDP-two-convex-comb}, the lattice points of the join $\mathcal{P} * \mathcal{Q}$ consist exactly of the canonical embeddings $\widetilde{u} = (u, \mathbf{0}_q, 1)$ for $u \in \mathcal{P} \cap \mathbb{Z}^p$ and $\widetilde{v} = (\mathbf{0}_p, v, 0)$ for $v \in \mathcal{Q} \cap \mathbb{Z}^q$. We set
$$ \omega(\widetilde{u}) = \omega_\mathcal{P}(u), \quad \text{and} \quad \omega(\widetilde{v}) = \omega_\mathcal{Q}(v). $$

To prove that $\mathcal{T}_{\mathcal{P}*\mathcal{Q}}$ is the regular triangulation induced by $\omega$, we must show that for any unimodular simplex $\Delta = \Delta_\mathcal{P} * \Delta_\mathcal{Q} \in \mathcal{T}_{\mathcal{P}*\mathcal{Q}}$, its lifted vertices form a lower facet of the lifted polytope $\widehat{\mathcal{P} * \mathcal{Q}}$. This is equivalent to finding a global affine functional $H(x, y, t) = \langle c_\mathcal{P}, x \rangle + \langle c_\mathcal{Q}, y \rangle + c_t t + c_0$ that strictly supports the lifted simplex $\widehat{\Delta}$.

Since $\Delta_\mathcal{P} \in \mathcal{T}_\mathcal{P}$, there exists a strict lower supporting functional $h_\mathcal{P}(u) = \langle c_\mathcal{P}, u \rangle + a_\mathcal{P}$ for $\widehat{\mathcal{P}}$. That is, $h_\mathcal{P}(u) \le \omega_\mathcal{P}(u)$ for all $u \in \mathcal{P} \cap \mathbb{Z}^p$, with equality holding if and only if $u \in \operatorname{vert}(\Delta_\mathcal{P})$.
Similarly, since $\Delta_\mathcal{Q} \in \mathcal{T}_\mathcal{Q}$, there exists a strict lower supporting functional $h_\mathcal{Q}(v) = \langle c_\mathcal{Q}, v \rangle + a_\mathcal{Q}$ for $\widehat{\mathcal{Q}}$, with equality holding if and only if $v \in \operatorname{vert}(\Delta_\mathcal{Q})$.

We construct the global functional $H$ by setting $c_0 = a_\mathcal{Q}$ and $c_t = a_\mathcal{P} - a_\mathcal{Q}$. Let us evaluate $H$ on the lattice points of $\mathcal{P} * \mathcal{Q}$:
For any $\widetilde{u} = (u, \mathbf{0}_q, 1)$, we have:
$$ H(\widetilde{u}) = \langle c_\mathcal{P}, u \rangle + c_t \cdot 1 + c_0 = \langle c_\mathcal{P}, u \rangle + a_\mathcal{P} = h_\mathcal{P}(u) \le \omega_\mathcal{P}(u) = \omega(\widetilde{u}). $$
For any $\widetilde{v} = (\mathbf{0}_p, v, 0)$, we have:
$$ H(\widetilde{v}) = \langle c_\mathcal{Q}, v \rangle + c_t \cdot 0 + c_0 = \langle c_\mathcal{Q}, v \rangle + a_\mathcal{Q} = h_\mathcal{Q}(v) \le \omega_\mathcal{Q}(v) = \omega(\widetilde{v}). $$

By the strictness of $h_\mathcal{P}$ and $h_\mathcal{Q}$, the equality $H(\widetilde{z}) = \omega(\widetilde{z})$ holds if and only if $\widetilde{z}$ is an embedding of a vertex from $\Delta_\mathcal{P}$ or $\Delta_\mathcal{Q}$. Thus, $H$ is a strict lower supporting hyperplane for the lifted simplex $\widehat{\Delta_\mathcal{P} * \Delta_\mathcal{Q}}$.

By the construction above, each simplex in $\mathcal{T}_{\mathcal{P}*\mathcal{Q}}$ corresponds bijectively to a distinct lower facet of $\widehat{\mathcal{P} * \mathcal{Q}}$. Because the simplices of $\mathcal{T}_{\mathcal{P}*\mathcal{Q}}$ already form a complete geometric partition of the base polytope $\mathcal{P} * \mathcal{Q}$, their corresponding lower facets have completely exhausted the lower hull of $\widehat{\mathcal{P} * \mathcal{Q}}$ (any additional lower facet would force an overlapping projection, violating the convexity of the lower hull). Therefore, the regular triangulation induced by $\omega$ is precisely $\mathcal{T}_{\mathcal{P}*\mathcal{Q}}$. Therefore, $\mathcal{T}_{\mathcal{P}*\mathcal{Q}}$ is a regular unimodular triangulation.

\end{proof}

\subsection{Quadratic triangulation}

Let $V = \{x_1, \ldots, x_n\}$ be a finite set, called a \emph{vertex set}. An \emph{(abstract) simplicial complex} on $V$ is a collection
$\Gamma$ of subsets $F$ of $V$ satisfying:
\begin{enumerate}
\item[\textup{(i)}] $\{x_i\} \in \Gamma$ for $1 \leq i \leq n$,
\item[\textup{(ii)}] If $F \in \Gamma$ and $G \subseteq F$, then $G \in \Gamma$.
\end{enumerate}
An element $F$ of $\Gamma$ is called a \emph{face}. A maximal face $F$, i.e., a face that is not contained in any larger face, is called a \emph{facet}. The \emph{dimension} of $F$ is $\dim F=\#F - 1$.
An $i$-dimensional face is called an \emph{$i$-face}.
The \emph{dimension} of $\Gamma$ is defined to be the maximum dimension of a face of $\Gamma$.

For a detailed treatment of simplicial complexes, we refer to \cite{RP.StanleyCO-Al} and \cite[Chapter 12]{RP.StanleyAC}.

General simplicial complexes for which every minimal non-face has two elements are called \emph{flag complexes}.
Let $\Gamma$ be a $(d-1)$-dimensional simplicial complex. For $0\leq k\leq d-1$, the \emph{$k$-skeleton} $\Gamma_k$ of $\Gamma$ is defined by $\Gamma_k=\{F\in \Gamma \mid \dim F\leq k\}$.

Let $G = (V, E)$ be a finite, simple, undirected graph, where $V$ is the vertex set and $E$ is the edge set. A subset of vertices $C \subseteq V$ is called a \emph{clique }of $G$ if every pair of distinct vertices $u, v \in C$ is mutually adjacent, i.e., $\{u, v\} \in E$. Equivalently, $C$ is a clique if and only if the induced subgraph $G[C]$ is a complete graph.

The \emph{clique complex} (sometimes referred to as the flag complex of the graph) of $G$, denoted by $\Gamma(G)$, is the abstract simplicial complex on the vertex set $V$ whose faces are precisely the cliques of $G$. Formally, a subset $\sigma \subseteq V$ is a face of $\Gamma(G)$ if and only if $\sigma$ is a clique in $G$.

\begin{rem}
According to Definition~\ref{Dfn-triangulation}, a triangulation $\mathcal{T}_\mathcal{P}$ is defined geometrically as a collection of maximal $p$-simplices. However, to seamlessly apply tools from topological combinatorics, we standardly identify $\mathcal{T}_\mathcal{P}$ with its associated downward-closed abstract simplicial complex (i.e., the complex containing all these maximal simplices alongside all of their faces).

By a mild abuse of notation, we use $\mathcal{T}_\mathcal{P}$ to denote both the geometric triangulation and this abstract complex. Therefore, when we write $\sigma \in \mathcal{T}_\mathcal{P}$, it formally implies that $\sigma$ is a valid face of some maximal simplex in the triangulation.
\end{rem}

\begin{lem}{\em \cite[Chapter 9.1]{Kozlov}}\label{Lemma-Flag-Clique}
Let $\Gamma$ be an abstract simplicial complex, and let $G$ denote its $1$-skeleton (the underlying graph formed by its vertices and $1$-dimensional simplices). The complex $\Gamma$ is a flag complex if and only if every clique in $G$ constitutes a valid face in $\Gamma$. Equivalently, $\Gamma$ is a flag complex if and only if it coincides with the clique complex of its $1$-skeleton, i.e., $\Gamma = \Gamma(G)$.
\end{lem}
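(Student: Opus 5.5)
The plan is to prove both directions directly from the definitions, working with \emph{minimal non-faces} of $\Gamma$. A minimal non-face is a subset $N \subseteq V$ with $N \notin \Gamma$ such that every proper subset of $N$ lies in $\Gamma$. I will also use three basic observations.
\begin{enumerate}
\item[\textup{(a)}] Every singleton $\{x_i\}$ is a face, by axiom (i). Hence every minimal non-face has at least two elements.
\item[\textup{(b)}] Every face of $\Gamma$ is a clique of $G$. If $F\in\Gamma$ and $u\neq v\in F$, then $\{u,v\}\subseteq F$ is a face by axiom (ii), i.e.\ an edge of $G$. So $\Gamma \subseteq \Gamma(G)$ always holds.
\item[\textup{(c)}] Every non-face $S$ contains a minimal non-face. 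Choose a subset of $S$ that is a non-face and is minimal under inclusion among such subsets; this is possible because $S$ is finite.
\end{enumerate}
By (b), the statement ``every clique of $G$ is a face of $\Gamma$'' is the same as $\Gamma(G)\subseteq\Gamma$, which is in turn equivalent to $\Gamma=\Gamma(G)$. So the two formulations in the lemma coincide, and it suffices to prove that $\Gamma$ is flag if and only if every clique of $G$ is a face.

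($\Rightarrow$) Assume $\Gamma$ is flag, and suppose some clique $C$ of $G$ is not a face. By (c), $C$ contains a minimal non-face $N$. By the flag condition, $|N|=2$, say $N=\{u,v\}$. But $u,v\in C$ and $C$ is a clique, so $\{u,v\}$ is an edge of $G$, i.e.\ a $1$-face of $\Gamma$. This contradicts $N\notin\Gamma$.

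($\Leftarrow$) Assume every clique of $G$ is a face, and let $N$ be a minimal non-face. By (a), $|N|\ge 2$. Suppose $|N|\ge 3$. Then every $2$-subset of $N$ is a proper subset, hence a face by minimality, hence an edge of $G$. So $N$ is a clique, and by hypothesis a face, a contradiction. Therefore $|N|=2$, and $\Gamma$ is flag.

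There is no real obstacle in this argument. The only points needing care are applying the minimality of $N$ to its $2$-subsets in the case $|N|\ge 3$, and remembering that singletons are faces by axiom (i), so that minimal non-faces are never singletons.
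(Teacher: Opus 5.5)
Your proof is correct and complete. Observation (b) correctly reduces the second formulation, $\Gamma=\Gamma(G)$, to the first, and both directions of the minimal-non-face argument work as written. The one unstated detail is that $\emptyset\in\Gamma$, being a subset of any singleton face. Together with axiom (i), this is what rules out minimal non-faces of size $0$ or $1$ in (a).

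The paper gives no proof of this lemma; it only quotes it from Kozlov's book, so there is no competing argument to compare against. Your direct verification from the definitions is the standard one. It makes the paper's later appeal to the lemma self-contained. That appeal, in the proof of Theorem~\ref{Theorem-Quadratic-Triang-Join}, uses exactly your ($\Leftarrow$) direction: every clique of the $1$-skeleton is a face, hence the complex is flag.
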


\begin{thm}\label{Theorem-Quadratic-Triang-Join}
If $\mathcal{P}$ and $\mathcal{Q}$ both admit quadratic triangulations $\mathcal{T}_\mathcal{P}$ and $\mathcal{T}_\mathcal{Q}$, then their join $\mathcal{P} * \mathcal{Q}$ also admits a quadratic triangulation $\mathcal{T}_{\mathcal{P}*\mathcal{Q}}$.
\end{thm}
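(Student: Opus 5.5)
The triangulation we use is the join triangulation $\mathcal{T}_{\mathcal{P}*\mathcal{Q}}$ of Lemma~\ref{Lemma-triangulation-join}. By Theorems~\ref{Theorem-Unimodular-Triangulation} and~\ref{Theorem-Regular-Uniom-Triangu}, this triangulation is already regular and unimodular. The regularity comes from the joint height function $\omega$, built from height functions $\omega_\mathcal{P}$ and $\omega_\mathcal{Q}$ that induce the quadratic triangulations $\mathcal{T}_\mathcal{P}$ and $\mathcal{T}_\mathcal{Q}$. So it remains only to show that $\mathcal{T}_{\mathcal{P}*\mathcal{Q}}$ is flag.

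First, I will describe $\mathcal{T}_{\mathcal{P}*\mathcal{Q}}$ as an abstract simplicial complex. By Lemma~\ref{Lemma-IDP-two-convex-comb}, its vertex set is the disjoint union of the embedded vertex sets $\widetilde{V}_\mathcal{P}$ and $\widetilde{V}_\mathcal{Q}$. Every face of a maximal simplex $\Delta_\mathcal{P} * \Delta_\mathcal{Q}$ has the form $\sigma * \tau$, where $\sigma$ is a face of $\Delta_\mathcal{P}$ and $\tau$ is a face of $\Delta_\mathcal{Q}$; either may be empty. Conversely, any such pair $(\sigma,\tau)$ extends to a maximal pair, because every face of a complex lies in some facet. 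Hence $\mathcal{T}_{\mathcal{P}*\mathcal{Q}}$ is exactly the simplicial join
\[
\mathcal{T}_\mathcal{P} * \mathcal{T}_\mathcal{Q} = \{\sigma \sqcup \tau \mid \sigma \in \mathcal{T}_\mathcal{P},\ \tau \in \mathcal{T}_\mathcal{Q}\}.
\]
From this description, the $1$-skeleton $G$ of the join consists of three kinds of edges: the edges of $\mathcal{T}_\mathcal{P}$, the edges of $\mathcal{T}_\mathcal{Q}$, and every edge $\{\widetilde{u},\widetilde{v}\}$ with $\widetilde{u}\in\widetilde{V}_\mathcal{P}$ and $\widetilde{v}\in\widetilde{V}_\mathcal{Q}$. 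The mixed edges are present because each vertex lies in some maximal simplex of its own triangulation.

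Next, I will apply Lemma~\ref{Lemma-Flag-Clique}. Let $C$ be a clique in $G$ and write $C = C_\mathcal{P} \sqcup C_\mathcal{Q}$ according to the two vertex sets. The only edges of $G$ joining two vertices on the $\mathcal{P}$-side are edges of $\mathcal{T}_\mathcal{P}$. Therefore $C_\mathcal{P}$ is a clique in the $1$-skeleton of $\mathcal{T}_\mathcal{P}$, and since $\mathcal{T}_\mathcal{P}$ is flag, Lemma~\ref{Lemma-Flag-Clique} gives $C_\mathcal{P}\in\mathcal{T}_\mathcal{P}$. The same argument gives $C_\mathcal{Q}\in\mathcal{T}_\mathcal{Q}$. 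Consequently $C = C_\mathcal{P}\sqcup C_\mathcal{Q}$ is a face of the join, so $\mathcal{T}_{\mathcal{P}*\mathcal{Q}}$ equals the clique complex of its $1$-skeleton and is flag.

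The main obstacle is the first step: justifying carefully that the geometric join triangulation, viewed as a downward-closed complex, coincides with the abstract simplicial join. In particular we must confirm that no edge between two $\mathcal{P}$-side vertices appears in the join unless it is already an edge of $\mathcal{T}_\mathcal{P}$. This follows because every face of $\Delta_\mathcal{P}*\Delta_\mathcal{Q}$ is the join of a face of $\Delta_\mathcal{P}$ with a face of $\Delta_\mathcal{Q}$: the vertex set of the join is the disjoint union of the two vertex sets, and the join is a simplex by Proposition~\ref{Prop-Join-Properties}(iii). Once this identification is in place, the flag argument is purely combinatorial.
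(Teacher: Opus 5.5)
Your proposal is correct and takes essentially the same route as the paper. Both arguments reduce to flagness via the regular unimodular join theorem, split a clique of the $1$-skeleton into its $\mathcal{P}$- and $\mathcal{Q}$-parts, use flagness of each factor, and recombine via the join. Your explicit identification of the geometric join triangulation with the abstract simplicial join $\mathcal{T}_\mathcal{P} * \mathcal{T}_\mathcal{Q}$ supplies a step that the paper only asserts.
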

\begin{proof}
By definition, a triangulation is quadratic if and only if it is unimodular, regular, and flag.
By Theorem \ref{Theorem-Regular-Uniom-Triangu}, it remains to demonstrate that $\mathcal{T}_{\mathcal{P}*\mathcal{Q}}$ is a flag complex.

By Proposition~\ref{Prop-Join-Properties}, we have
$$V = \operatorname{vert}(\mathcal{T}_{\mathcal{P}*\mathcal{Q}}) = \rho_\mathcal{P}(\operatorname{vert}(\mathcal{T}_\mathcal{P})) \cup \rho_\mathcal{Q}(\operatorname{vert}(\mathcal{T}_\mathcal{Q})),$$
where $\rho_\mathcal{P}$ and $\rho_\mathcal{Q}$ are the canonical embeddings into $\mathbb{R}^{p+q+1}$.

By Lemma \ref{Lemma-Flag-Clique}, a simplicial complex $\mathcal{T}$ is a flag complex (or a clique complex) if every clique (complete subgraph) in its $1$-skeleton corresponds to the vertex set of a face in $\mathcal{T}$.
Let $\mathcal{G}_{\mathcal{P}*\mathcal{Q}}$ denote the $1$-skeleton (the underlying graph) of $\mathcal{T}_{\mathcal{P}*\mathcal{Q}}$.
By the definition of the join operation for simplicial complexes, the edge set $E$ of $\mathcal{G}_{\mathcal{P}*\mathcal{Q}}$ is constructed by taking the edges of $\mathcal{T}_\mathcal{P}$, the edges of $\mathcal{T}_\mathcal{Q}$, and all possible bipartite cross-edges connecting any vertex in $\mathcal{T}_\mathcal{P}$ to any vertex in $\mathcal{T}_\mathcal{Q}$ (i.e., for any $u \in \operatorname{vert}(\mathcal{T}_\mathcal{P})$ and $v \in \operatorname{vert}(\mathcal{T}_\mathcal{Q})$, the edge $\{\rho_\mathcal{P}(u), \rho_\mathcal{Q}(v)\} \in E$).

To prove the flag property, let $C \subseteq V$ be an arbitrary clique in $\mathcal{G}_{\mathcal{P}*\mathcal{Q}}$. We must show that $C$ is the vertex set of a simplex in $\mathcal{T}_{\mathcal{P}*\mathcal{Q}}$. We define the corresponding vertex sets in $\mathcal{T}_\mathcal{P}$ and $\mathcal{T}_\mathcal{Q}$ via the pre-images of $C$ under the canonical embeddings:
$$C_\mathcal{P} = \{ u \in \operatorname{vert}(\mathcal{T}_\mathcal{P}) \mid \rho_\mathcal{P}(u) \in C \}, \quad \text{and} \quad C_\mathcal{Q} = \{ v \in \operatorname{vert}(\mathcal{T}_\mathcal{Q}) \mid \rho_\mathcal{Q}(v) \in C \}.$$

Because $C$ is a clique in $\mathcal{G}_{\mathcal{P}*\mathcal{Q}}$, any two distinct vertices within $\rho_\mathcal{P}(C_\mathcal{P})$ must be connected by an edge in $E$. Since cross-edges only exist between $\mathcal{P}$ and $\mathcal{Q}$, this implies that any two vertices in $C_\mathcal{P}$ are connected by an edge in the $1$-skeleton of $\mathcal{T}_\mathcal{P}$. Therefore, $C_\mathcal{P}$ forms a clique in the $1$-skeleton of $\mathcal{T}_\mathcal{P}$.

By the hypothesis that $\mathcal{T}_\mathcal{P}$ is a quadratic triangulation, it is inherently flag. Consequently, the clique $C_\mathcal{P}$ spans a valid face in $\mathcal{T}_\mathcal{P}$. Let this face be denoted by $\sigma_\mathcal{P} \in \mathcal{T}_\mathcal{P}$ (where $\sigma_\mathcal{P} = \emptyset$ if $C_\mathcal{P} = \emptyset$).

Similarly, $C_\mathcal{Q}$ forms a clique in the $1$-skeleton of $\mathcal{T}_\mathcal{Q}$. Since $\mathcal{T}_\mathcal{Q}$ is flag, $C_\mathcal{Q}$ spans a valid face $\sigma_\mathcal{Q} \in \mathcal{T}_\mathcal{Q}$.

By Lemma~\ref{Lemma-triangulation-join}, if $\sigma_\mathcal{P} \in \mathcal{T}_\mathcal{P}$ and $\sigma_\mathcal{Q} \in \mathcal{T}_\mathcal{Q}$ are faces, their join $\sigma_\mathcal{P} * \sigma_\mathcal{Q}$ is a face in $\mathcal{T}_{\mathcal{P}*\mathcal{Q}}$. The vertex set of this join simplex is exactly
$$\operatorname{vert}(\sigma_\mathcal{P} * \sigma_\mathcal{Q}) = \rho_\mathcal{P}(\operatorname{vert}(\sigma_\mathcal{P})) \cup \rho_\mathcal{Q}(\operatorname{vert}(\sigma_\mathcal{Q})) = \rho_\mathcal{P}(C_\mathcal{P}) \cup \rho_\mathcal{Q}(C_\mathcal{Q}) = C.$$
Thus, the arbitrary clique $C$ precisely spans a simplex $\sigma_\mathcal{P} * \sigma_\mathcal{Q}$ in $\mathcal{T}_{\mathcal{P}*\mathcal{Q}}$. This explicitly verifies that $\mathcal{T}_{\mathcal{P}*\mathcal{Q}}$ is a flag complex.

Therefore, the join $\mathcal{P} * \mathcal{Q}$ admits a quadratic triangulation.
\end{proof}

\section{Concluding remarks}\label{Section-Seven-CR}

This paper mainly studies the Ehrhart properties of the join of two lattice polytopes. This solves an open problem on the website of the American Institute of Mathematics \cite{AimPl-Website}.

As a byproduct, we discuss the necessary and sufficient condition for the Cartesian product of two Gorenstein polytopes to remain a Gorenstein polytope.
As an open problem mentioned above (also see \cite{Ferroni23}), and one that the authors are particularly concerned with, is:
If $h^*_{\mathcal{P}}$ and $h^*_{\mathcal{Q}}$ are log-concave, is necessarily  $h^*_{\mathcal{P}\times \mathcal{Q}}$ log-concave too?

Regarding the integer decomposition property (IDP) of polytopes, we would like to propose the following problem.
\begin{prob}
For any given polytope $\mathcal{P}$, devise an efficient algorithm to determine whether $\mathcal{P}$ possesses the IDP.
\end{prob}
The design of this algorithm may be helpful for studying the following question (see \cite[Conjecture 1.1]{Ferroni23}, \cite[Question 1.1]{Scheper-Langen}): whether a lattice polytope with the IDP property is $h^*$-unimodal.
A method to determine the IDP for a certain class of extremely special polytopes is given in reference \cite{Baum-Trotter}.
Braun, Davis, and Solus \cite{Braun-Davis-Solus} investigated the IDP property of a certain class of simplices.

%\noindent\textbf{Ethical approval:} Not applicable.

%\noindent\textbf{Conflict interest:} None.

%\noindent\textbf{Declaration of competing interest:} The authors declared that they had no conflicts of interest with respect to %their authorship or the publication of this article.

%\noindent\textbf{Data availability:} Data availability is not applicable to this article as no new data were created or analyzed in %this study.

%\noindent\textbf{Funding:} This work was partially supported by the National Natural Science Foundation of China***********.

\noindent
{\small \textbf{Acknowledgments:}}
%The authors would like to thank the anonymous referees for their valuable suggestions for improving the presentation.
We are very grateful to Mingzhi Zhang for discussions on the IDP property of polytopes, and in particular for recommending the reference \cite{Baum-Trotter} to us.
This work was partially supported by the National Natural Science Foundation of China [12571355].

\end{document}